\documentclass[12pt]{amsart}
\usepackage{amssymb}
\usepackage{graphicx}
\usepackage{amsfonts}
\usepackage{amsmath}
\usepackage{amsthm}
\usepackage{fancyhdr}
\usepackage{indentfirst}
\usepackage{hyperref}
\usepackage{comment}
\usepackage{color}
\usepackage{subcaption}
\usepackage{epsfig}
\usepackage{pst-grad} 
\usepackage{pst-plot} 
\usepackage[space]{grffile} 
\usepackage{etoolbox} 
\usepackage{mathrsfs} 
\usepackage{enumitem} 
\usepackage[margin=1.5in]{geometry}
\usepackage[normalem]{ulem}

\newtheorem{thm}{Theorem}[section]
\newtheorem{remark}[thm]{Remark}
\newtheorem*{proposition*}{Proposition}
\newtheorem{proposition}[thm]{Proposition}
\newtheorem{conjecture}[thm]{Conjecture}
\newtheorem*{defn*}{Definition}
\newtheorem{defn}{Definition}
\newtheorem{prop}[thm]{Proposition}

\newtheorem{lemma}[thm]{Lemma}
\newtheorem{theorem}[thm]{Theorem}

\newtheorem{cor}[thm]{Corollary}
\newtheorem{definition}[thm]{Definition}
\newtheorem{ex}[thm]{Example}

\def\two{\uppercase\expandafter{\romannumeral2}}

\thicklines
\usepackage[perpage,symbol*]{footmisc}
\DefineFNsymbols{circled}{{\ding{192}}{\ding{193}}{\ding{194}}
{\ding{195}}{\ding{196}}{\ding{197}}{\ding{198}}{\ding{199}}{\ding{200}}{\ding{201}}}
\setfnsymbol{circled}

\def\->{\rightarrow}
\def\=>{\xrightarrow}

\title[Yamabe invariants on compact manifolds]{On the Yamabe invariant of certain compact manifolds with boundary}
\author{Xuan Yao}
\address{Department of Mathematics, Malott Hall, Cornell University, Ithaca, NY 14853, USA}
\email{xy346@cornell.edu}
\date{Nov 2022}
\begin{document}

\begin{abstract}
We generalize Kobayashi's connected-sum inequality to the $\lambda$-Yamabe invariants. As an application, we calculate 
 the $\lambda$-Yamabe invariants of $\#m_1\mathbb{RP}^n\# m_2(\mathbb{RP}^{n-1}\times S^1)\#lH^n\#kS_+^n$, for any $\lambda\in [0,1]$,  $n\geq 3$, provided $k+l\geq 1$.

As a corollary, we prove that $\mathbb{RP}^n$ minus finitely many disjoint $n$-balls have the same $\lambda$-Yamabe invariants as the hemi-sphere, which forms an interesting contrast with the famous Bray-Neves results \cite{bray2004classification} on the Yamabe invariants of $\mathbb{RP}^3$.

\end{abstract}

\maketitle
\section{Introduction}
We begin by reminding the reader of the definitions of generalized Yamabe constants and Yamabe invariant of closed $n$-manifolds. 

Given a closed $n$-dimensional Riemannian manifold $(M,g)$, the Einstein-Hilbert energy functional is defined as
\[
E(g)=\frac{\int_{M}R_gdV_g}{(\int_MdV_g)^{\frac{n-2}{n}}},
\]
where $R_g$ is the scalar curvature.

Since $E$ is unbounded in either positive nor negative direction, Yamabe \cite{yamabe1960deformation} proposed to study the minimal value of this energy functional in a conformal class of metrics. We use $[g]$ to denote the conformal class of metrics on $M$ which contains $g$,  that is,
\[
[g]=\{u^{\frac{4}{n-2}}g: u\in H^1(M,g), u>0\}.
\]
The minimal value is called the {\em Yamabe constant} of this conformal class, denoted by $Y(M, [g])$. More precisely, 
\[
Y(M,[g])=\inf_{u\in H^1(M,g)}\frac{\int_{M}\frac{4(n-1)}{n-2}|\nabla u|_g^2dV_g+\int_{M}R_gdV_g}{(\int_{M}|u|^{\frac{2n}{n-2}}dV_g)^{\frac{n-2}{n}}}.
\]
The minimizer of the Hilbert-Einstein functional in each conformal class is a metric with constant scalar curvature. The existence of constant scalar curvature metrics in each conformal is known as the Yamabe Problem \cite{yamabe1960deformation}. This problem was solved after decades of efforts by Yamabe \cite{yamabe1960deformation}, Trudinger \cite{trudinger1968remarks}, Aubin \cite{aubin1976equations}, and finally completed by Schoen \cite{schoen1984conformal}.

By taking the supremum of Yamabe constants over all conformal classes of metrics on $M$, we could obtain a smooth topological invariant which is called the {\em Yamabe invariant}
\[
\sigma(M)=\sup_{[g]\in\mathcal C}Y(M,[g]),
\] 
where $\mathcal C$ denotes the set of all the conformal classes of metrics on $M$.

It is natural to generalize the Yamabe problem to compact manifolds with boundary. Given any $(M,g)$, a compact manifold with boundary, does there exist a metric $\tilde{g}$ conformal to $g$, such that $(M,\tilde{g})$ has constant scalar curvature on $M$, and constant mean curvature on $\partial M$? 

Escobar \cite{Escobar96} showed that for almost any Riemannian $(M,g)$, there exists a metric within the conformal class of $g$ having constant scalar curvature on $M$ and constant mean curvature on $\partial M$. To prove the generalized Yamabe Problem, Escobar \cite{Escobar96} defined the generalized Yamabe constants for manifolds with boundary. Based on his definitions, we define the $\lambda$-Yamabe constants and the $\lambda$-Yamabe invariants for compact manifolds with boundary; see Definition \ref{lambdayamabe}.

Akutagawa and Botvinnik \cite{akutagawa2018relative} defined the Relative Yamabe invariant to describe the Yamabe invariant for the manifold with boundary. They proved the approxiamation theorem, gluing theorem and some useful inequalities for Manifold with non-positive Yamabe invariant.

Now we state the Main result of this paper

\begin{theorem}\label{maintheorem}
Suppose $m_1,m_2,k,l$ are all non-negative integers, and $k+l\geq 1$, then
\begin{equation}\label{Mainequ}
\sigma_{\lambda}(\#m_1\mathbb{RP}^n\#m_2(\mathbb{RP}^{n-1}\times S^1)\#l H^n\#k S^n_+)=\sigma_{\lambda}(S^n_+),\quad \forall \lambda\in [0,1].
\end{equation}
Here $\mathbb{RP}^n$, $H^n$, $S^n_+$ denote $n$-dimensional projective planes, $n$-dimensional handle bodies and $n$-dimensional hemi-spheres respectively. $\sigma_\lambda$ denotes the $\lambda$-Yamabe invariant; see Definition \ref{lambdayamabe}.
\end{theorem}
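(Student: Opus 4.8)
The plan is to compute the invariant by establishing two matching inequalities: an upper bound showing $\sigma_\lambda(\#m_1\mathbb{RP}^n\#m_2(\mathbb{RP}^{n-1}\times S^1)\#lH^n\#kS_+^n)\leq\sigma_\lambda(S^n_+)$, and a lower bound showing the reverse. The upper bound is the place where I expect the generalized Kobayashi connected-sum inequality (the paper's stated generalization of Kobayashi's result) to do the heavy lifting. The strategy is to view the whole connected sum as being built up from the pieces $\mathbb{RP}^n$, $\mathbb{RP}^{n-1}\times S^1$, $H^n$, and at least one hemisphere $S^n_+$, and to apply the connected-sum inequality repeatedly. Since each summand $\mathbb{RP}^n$, $\mathbb{RP}^{n-1}\times S^1$, and $H^n$ is a manifold carrying a metric of positive scalar curvature with a nonnegative Yamabe-type invariant, and since connected-summing with such pieces cannot increase the invariant beyond the model $S^n_+$ once the hemisphere summand is present, one obtains the upper bound. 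The presence of at least one hemisphere factor (guaranteed by $k+l\geq 1$, since $H^n$ and $S^n_+$ both contribute boundary) is essential so that the connect sum is genuinely a manifold with boundary and the comparison is against $\sigma_\lambda(S_+^n)$ rather than a closed-manifold invariant.

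For the lower bound, the key fact is that $S_+^n$ is conformally a round hemisphere and that $\sigma_\lambda(S_+^n)$ is realized by (and in fact maximized at) the standard round metric, analogous to the closed case where $\sigma(S^n)$ is the supremum over all closed $n$-manifolds. First I would recall that for $\lambda\in[0,1]$ the $\lambda$-Yamabe invariant of any compact $n$-manifold with nonempty boundary is bounded above by $\sigma_\lambda(S_+^n)$; this is the analogue, for manifolds with boundary, of Aubin's inequality $\sigma(M)\leq\sigma(S^n)$, and it should follow from Escobar's sharp constant for the hemisphere together with a local test-function/blow-up argument. Granting that universal upper bound, the content of the lower bound is to exhibit a sequence of conformal classes on the connected sum whose $\lambda$-Yamabe constants approach $\sigma_\lambda(S_+^n)$.

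The mechanism for the lower bound is a gluing/approximation argument in the spirit of Akutagawa–Botvinnik: I would take the round hemisphere metric on the $S_+^n$ summand and attach the other summands through thin necks, using the approximation theorem to control the scalar curvature and boundary mean curvature contributions from the neck regions so that the resulting $\lambda$-Yamabe constant loses only an arbitrarily small amount. Concretely, the topological summands $\mathbb{RP}^n$, $\mathbb{RP}^{n-1}\times S^1$, $H^n$ all admit positive scalar curvature metrics, so one can perform the gluing while keeping scalar curvature positive, and the neck can be made arbitrarily thin so its volume contribution is negligible; letting the neck parameter degenerate produces conformal classes on the connected sum whose constants converge to that of the round $S_+^n$. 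Combining the two inequalities yields equality \eqref{Mainequ}.

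The main obstacle I anticipate is the boundary analysis in both directions: unlike the closed case, the $\lambda$-Yamabe functional carries a boundary mean-curvature term weighted by $\lambda$, so every test-function estimate, every neck construction, and every blow-up must simultaneously control an interior scalar-curvature integral and a boundary integral, and these must be balanced uniformly in $\lambda\in[0,1]$. In particular, for the lower bound I must ensure that attaching summands does not create boundary curvature concentrating near the neck, and for the universal upper bound $\sigma_\lambda(M)\leq\sigma_\lambda(S_+^n)$ the blow-up test functions must be placed and rescaled so that, depending on whether the concentration point lies in the interior or on $\partial M$, the limiting sharp constant is exactly the hemisphere constant for every value of $\lambda$. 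Handling this $\lambda$-dependence uniformly, rather than for a single fixed value, is where the argument will require the most care.
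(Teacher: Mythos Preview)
Your plan inverts the roles of the two main tools. Kobayashi's connected-sum inequality (and the paper's generalization of it) goes the other way: it says
\[
\sigma_\lambda(M_1\#M_2)\;\geq\;\sigma_\lambda(M_1\sqcup M_2)\;\geq\;\min\{\sigma_\lambda(M_1),\sigma_\lambda(M_2)\}
\]
when the summands have nonnegative invariant. So Kobayashi furnishes the \emph{lower} bound $\sigma_\lambda(N)\geq\sigma_\lambda(S^n_+)$, not the upper bound. Your sentence ``connected-summing with such pieces cannot increase the invariant beyond the model $S^n_+$'' is simply not what the inequality asserts; there is no mechanism in Kobayashi that bounds the connected sum from above.

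Conversely, the upper bound $\sigma_\lambda(N)\leq\sigma_\lambda(S^n_+)$ is the easy direction: it is exactly Escobar's universal inequality for any compact manifold with nonempty boundary, which you do state (though you place it in the lower-bound paragraph). No gluing or neck construction is needed here. What the paper actually does for the lower bound is apply generalized Kobayashi iteratively to obtain $\sigma_\lambda(N)\geq\min\{\sigma_\lambda(\mathbb{RP}^n),\,\sigma_\lambda(\mathbb{RP}^{n-1}\times S^1),\,\sigma_\lambda(H^n),\,\sigma_\lambda(S^n_+)\}$, and then feeds in the separate estimates $\sigma_\lambda(\mathbb{RP}^n)\geq\sigma_\lambda(S^n_+)$ and $\sigma_\lambda(\mathbb{RP}^{n-1}\times S^1)\geq\sigma_\lambda(S^n_+)$ (these are proved earlier, via a minimal-hypersurface cutting argument and a covering argument using Schoen's metrics on $S^{n-1}\times S^1$, respectively) together with $\sigma_\lambda(H^n)=\sigma_\lambda(S^n_+)$. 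You never identify these summand estimates as the essential input; merely knowing the summands admit PSC metrics is not enough. Finally, the generalized Kobayashi inequality in the paper is only established for $\lambda\in(0,1]$, and the endpoint $\lambda=0$ is handled by a separate continuity argument for $Y_\lambda$ and $\sigma_\lambda(S^n_+)$ in $\lambda$; your proposal does not address this.
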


The proof of the Main Theorem relies on the generalization of Kobayashi's connected-sum inequality \cite{Kobayashi1987/88}. This inequality \cite{Kobayashi1987/88} is an important tool for estimating Yamabe invariants of closed manifolds; it states the fact that the Yamabe invariant of the connected sum of two closed manifolds is greater than or equal to the Yamabe invariant of the disjoint union of these two manifolds. 

There are many developments since Kobayshi's result \cite{Kobayashi1987/88}. For instance, Petean and Yun \cite{petean1999surgery} proved certain estimates of the Yamabe invariants under surgery of codimension at least $3$, where Kobayashi's inequality can be viewed as the $0$-dimensional case of their results. Schwartz \cite{schwartz2009monotonicity} showed monotonicity of two special cases of the generalized Yamabe invariants ($\sigma_{1,0}$ and $\sigma_{0,1}$) under connected sum over the boundary.

In this paper, we generalize Kobayashi's inequality and Schwartz's result to the $\lambda$-Yamabe invariants, for all $\lambda\in [0,1]$. With these tools, we prove the Main Theorem \ref{maintheorem}.

There is a substantial body of work on Yamabe invariants and we will briefly review some significant results. 

Lebrun and his collaborators \cite {akutagawa2007perelman, gursky1998yamabe,lebrun1996yamabe, lebrun2004einstein, lebrun1997kodaira} computed the Yamabe invariants for large classes of $4$-manifolds. With Lebrun's result \cite{lebrun1996yamabe}, we strengthen \ref{maintheorem} in dimension $4$, see \ref{lebrunexample}. 

An important result by Petean \cite{petean2000yamabe} is that the Yamabe invariant of any simply connected closed manifold with dimension greater than or equal to $5$ is non-negative. With Petean's result \cite{petean2000yamabe} and generalized Kobayashi's connected-sum inequality \ref{generalizedkobayshiinequality}, we show that the $\lambda$-Yamabe invariant of any simply connected clsoed manifold with dimension greater than or equal to $5$ minus finitely many disjoint balls is non-negative.

One of the mostly celebrated results is due to Bray and Neves \cite{bray2004classification}, they computed the Yamabe invariant of $\mathbb{RP}^3$, which is the only verified non-trivial case of Schoen's conjecture. Later, Akugatawa and Neves \cite{akutagawa20073} completed the classification of all closed $3$-manifolds with Yamabe invariant greater than that of $\mathbb{RP}^3$.

\begin{conjecture}[Schoen]
The Yamabe invariant of lens space is 
\[
\sigma(L(p,q))=\sigma_p:=\frac{\sigma(S^3)}{p^{2/3}},
\]
where $p,q$ are relatively prime.
\end{conjecture}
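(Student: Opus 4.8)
Since this final statement is Schoen's still-open conjecture, a genuine \emph{proof} is out of reach; the honest plan is a research program anchored to the only known case, Bray--Neves' theorem $\sigma(\mathbb{RP}^3)=\sigma(S^3)/2^{2/3}$, and it splits into an elementary lower bound and a deep matching upper bound. For the lower bound, the round metric on $S^3$ is invariant under the free isometric $\mathbb{Z}/p$-action whose quotient is $L(p,q)$, so it descends to a constant-curvature metric $\bar g$ on $L(p,q)$ with $R_{\bar g}=6$ and $\mathrm{Vol}(\bar g)=2\pi^2/p$. As $\bar g$ is Einstein and $L(p,q)$ is not conformally the round sphere, Obata's theorem makes $\bar g$ the unique Yamabe minimizer (up to scale) in its conformal class, so $Y(L(p,q),[\bar g])=R_{\bar g}\,\mathrm{Vol}(\bar g)^{2/3}=6(2\pi^2/p)^{2/3}=\sigma(S^3)/p^{2/3}$. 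Taking the supremum over conformal classes gives $\sigma(L(p,q))\geq \sigma(S^3)/p^{2/3}$. I note that the connected-sum techniques of this paper, such as the generalized Kobayashi inequality, produce only lower bounds of this flavour, and since lens spaces are not nontrivial connected sums for $p\geq 3$ they cannot reach the hard direction.

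For the upper bound I would first reduce to a sharp volume comparison. Because $\sigma$ is positive here, it suffices to bound $Y(L(p,q),[g])$ for every $g$; replacing $g$ by its Yamabe representative I may assume $R_g\equiv 6$, whence $Y([g])=6\,\mathrm{Vol}(g)^{2/3}$. Thus the conjecture is equivalent to the statement that every metric on $L(p,q)$ with $R\geq 6$ satisfies $\mathrm{Vol}\leq 2\pi^2/p$ (equivalently, every free $\mathbb{Z}/p$-invariant metric on $S^3$ with $R\geq 6$ has volume at most $2\pi^2$), with equality only for the round metric. Since scalar curvature alone imposes no volume bound, the entire difficulty is to convert the $\mathbb{Z}/p$ symmetry into quantitative control.

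To prove this comparison I would follow Bray--Neves and run Huisken--Ilmanen's weak inverse mean curvature flow on the lifted invariant metric $\tilde g$ on $S^3$, but now starting simultaneously from the $p$-point orbit $\{x_0,\gamma x_0,\dots,\gamma^{p-1}x_0\}$ of a generator $\gamma$ of the deck group. Geroch monotonicity of the Hawking mass (valid since $R\geq 6>0$, and sharpened by the Gauss--Bonnet term) would be played against the upper bound on Hawking mass forced once the $p$ equivariant ``bubbles'' exhaust $S^3$, and matching this against the round model should extract the sharp constant $p^{2/3}$. The hard part --- and precisely why the conjecture is open for $p\geq 3$ --- is the combinatorics of the weak flow after the bubbles meet: the flow jumps to its outward-minimizing envelope, components merge, and for $p\geq 3$ the orbit of a single touching point triggers several simultaneous, geometrically entangled mergers whose connectedness and genus must be tracked to keep the monotonicity sharp. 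Controlling this equivariant merging \emph{quantitatively} rather than merely qualitatively is the crux I expect to be the main obstacle; a successful argument would likely require either a symmetry-adapted existence/regularity theory for the weak flow or a more rigid replacement for inverse mean curvature flow --- such as equivariant min-max minimal surfaces or a $\mu$-bubble comparison --- that respects the $\mathbb{Z}/p$ action throughout.
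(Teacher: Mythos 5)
First, a point of comparison: the paper offers no proof of this statement --- it is recorded precisely as Schoen's conjecture, which is open for every $p\geq 3$ and known only for $p=2$ through Bray--Neves. The only portion the paper actually establishes is the lower bound $\sigma_1(L(p,q))\geq \sigma_1(S^3)/p^{2/3}$, stated as the remark following Lemma \ref{kfoldcovering}. Your lower bound is correct and reaches the same conclusion by a different route: you identify the Yamabe constant of the round conformal class exactly, via Obata's uniqueness theorem applied to the descended Einstein metric $\bar g$, whereas the paper simply lifts test functions through the $p$-fold Riemannian covering and bounds $Y(L(p,q),[\bar g])$ from below by $Y(S^3,[g_0])/p^{2/3}$. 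The paper's covering argument is more elementary (it needs neither Obata nor the solution of the Yamabe problem) and transfers verbatim to the $\lambda$-Yamabe constants used throughout the paper; your argument buys the sharper statement that $\bar g$ itself is the minimizer of its class. Your observation that the connected-sum machinery of this paper (the generalized Kobayashi inequality) can only produce bounds of lower-bound type, and hence cannot touch the hard direction, is also accurate.

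However, your reduction of the upper bound contains a genuine mathematical error. The conjecture is \emph{not} equivalent to the statement that every metric on $L(p,q)$ with $R\geq 6$ satisfies $\mathrm{Vol}\leq 2\pi^2/p$; that statement is false, because a scalar curvature lower bound does not bound volume from above. Already on $S^3$ one may take a long capped cylinder --- $S^2(r)\times[0,L]$ with $2/r^2\geq 6$, smoothly capped by two spherical caps --- which has $R\geq 6$ and volume at least $4\pi r^2L$, hence arbitrarily large; Gromov--Lawson bead constructions do the same on any lens space (and equivariantly on $S^3$, by placing the beads along a $\mathbb{Z}/p$-orbit), so the ``equivalently'' phrased invariant version fails as well. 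What the normalization $R_g\equiv 6$ of a Yamabe representative actually yields is the much weaker reduction: every \emph{Yamabe minimizer} with $R\equiv 6$ has $\mathrm{Vol}\leq 2\pi^2/p$. The large-volume examples above carry constant-scalar-curvature metrics that are critical points but far from minimizing in their conformal classes, which is exactly why they do not contradict the conjecture. The distinction is not cosmetic: Bray--Neves' proof for $\mathbb{RP}^3$ uses conformal-class-level information (the Sobolev-type inequality defining the Yamabe constant, tested against functions built from the inverse mean curvature flow), not merely the pointwise bound $R\geq 6$, so an equivariant IMCF program aimed at your volume-comparison statement is aimed at a false target and would fail before one ever reaches the merging difficulties you correctly identify for $p\geq 3$. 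With the reduction corrected to Yamabe metrics, the rest of your sketch is a reasonable but speculative research program --- which, as you say yourself, is not a proof, consistent with the conjecture's open status and with the paper's own presentation of it.
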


A direct corollary of \ref{maintheorem} forms an interesting contrast with Bray and Neves' results \cite{bray2004classification}.
\begin{cor}
Suppose $n\geq 3$, for any $\lambda\in [0,1]$
\begin{equation}
\sigma_{\lambda}(\mathbb RP^n\setminus(\sqcup_{i=1}^kD_i))=\sigma_{\lambda}(S^n_+),
\end{equation}
where $\{D_i\}_{i=1}^k$ is a finite collection of disjoint $n$-balls. 
\end{cor}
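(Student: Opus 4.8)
The plan is to deduce the corollary directly from the Main Theorem \ref{maintheorem} by recognizing $\mathbb{RP}^n\setminus(\sqcup_{i=1}^k D_i)$ as a connected sum of exactly the type appearing there. Concretely, I would specialize the parameters to $m_1=1$, $m_2=0$, $l=0$, and leave $k\geq 1$ arbitrary, so that the manifold on the left-hand side of \eqref{Mainequ} becomes $\mathbb{RP}^n\# k S^n_+$. Since the corollary presumes that at least one ball is removed (i.e. $k\geq 1$), the hypothesis $k+l\geq 1$ is automatically satisfied with $l=0$, and the Main Theorem yields $\sigma_\lambda(\mathbb{RP}^n\# k S^n_+)=\sigma_\lambda(S^n_+)$ for every $\lambda\in[0,1]$.

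The key step is therefore the purely topological identification
\[
\mathbb{RP}^n\setminus\big(\textstyle\sqcup_{i=1}^k D_i\big)\cong \mathbb{RP}^n\# k S^n_+ .
\]
I would establish this by first recalling that the hemisphere $S^n_+$ is diffeomorphic to the closed disk $D^n$, with boundary the equator $\partial S^n_+=S^{n-1}$. Performing an interior connected sum of a closed manifold $N$ with $S^n_+$ amounts to deleting an open ball from the interior of $N$ and an open ball from the interior of $D^n$, then gluing the two resulting boundary spheres; since $D^n$ with an interior ball removed is a collar $S^{n-1}\times[0,1]$, the outcome is diffeomorphic to $N$ with a single open ball removed, the new boundary component being the equator of $S^n_+$. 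Iterating this construction $k$ times, and using associativity of the connected sum, shows that $\mathbb{RP}^n\# k S^n_+$ is obtained from $\mathbb{RP}^n$ by deleting $k$ disjoint open balls. The disk theorem guarantees that the diffeomorphism type of the result is independent of the location of the balls, so the identification above holds unambiguously.

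Finally, because $\sigma_\lambda$ is a smooth invariant and hence depends only on the diffeomorphism type of the manifold, the identification transports the equality furnished by the Main Theorem to
\[
\sigma_\lambda\big(\mathbb{RP}^n\setminus(\textstyle\sqcup_{i=1}^k D_i)\big)=\sigma_\lambda(S^n_+),
\]
which is the assertion of the corollary. I expect the only point requiring genuine care to be this topological identification itself, in particular verifying that gluing in a hemisphere creates exactly one new boundary sphere and leaves the diffeomorphism type of the complement unchanged; there is no analytic obstacle, since all of the hard estimates are already absorbed into the Main Theorem and the generalized Kobayashi inequality on which it rests.
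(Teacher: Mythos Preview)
Your proposal is correct and matches the paper's approach: the corollary is stated in the paper simply as a direct consequence of the Main Theorem, with no separate proof given, and your argument supplies exactly the intended specialization $m_1=1$, $m_2=l=0$, $k\geq 1$ together with the standard topological identification $\mathbb{RP}^n\# kS^n_+\cong \mathbb{RP}^n\setminus(\sqcup_{i=1}^k D_i)$.
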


\begin{remark}
    As opposed to Bray-Neves, a compact manifold with boundary whose relative fundamental group is $\mathbb Z_2$ has its Yamabe invariants the same as a solid ball. 

    It is still an open problem whether the Yamabe invariant of $\mathbb{RP}^n$ is $\sigma_2$. Our result may provide a new perspective on this problem.
\end{remark}

Another famous result of the Yamabe invariant is the computation of $\sigma(S^n\times S^1)$
\[
\sigma(S^{n-1}\times S^1)=\sigma(S^n).
\]

This is proven by Schoen \cite{schoen1989variational} and Kobayashi \cite{Kobayashi1987/88} independently. They both constructed a sequence of conformal classes of metrics on $S^n\times S^1$ whose Yamabe constant converges to the Yamabe invariant of $S^{n+1}$. 
More recently, Akutagawa, Florit and Petean \cite{akutagawa2007yamabe} gave a new proof of this result by studying the Yamabe constants of Riemannian products.

Using Proposition \ref{propofkobayashi}, a corollary of the Main Theorem, we give a new proof of this famous result without using any analytical tools.

The proof of \eqref{Mainequ} could also be applied to other examples. 

In dimension $4$, we compute the $\lambda$-Yamabe invariant of 
$\#m_1\mathbb{RP}^4\#m_2\mathbb{RP}^{4}\times S^1\# m_3 \mathbb{CP}^2\#lH^4\#k S_+^4$ provided $k+l\geq 1$.

We also show that for any simply connected closed manifold $M^n$, $n\geq 5$, $\# mM^n\#m_1\mathbb{RP}^{n}\#m_2(\mathbb{RP}^{n-1}\times S^1)\# lH^n\#k S^n_+$ has non-negative $\lambda$-Yamabe invariants.  

\subsection*{Sketch of proof} 
We first show that the lower bound for $\sigma_{\lambda}(\mathbb{RP}^n)$ and \\
$\sigma_{\lambda}(\mathbb {RP}^{n-1}\times S^1)$ is $\sigma_{\lambda}(S_+^n)$,
 for any $\lambda\in (0,1]$.

To show the lower bound of $\sigma_{\lambda}(\mathbb {RP}^{n-1}\times S^1)$, we carefully analyze Schoen's construction of conformal classes of metrics whose Yamabe constant converges to $\sigma(S^n)$, and note that the metrics in that sequence can be viewed as a $2$-fold Riemannian covering of $\mathbb {RP}^{n-1}\times S^1$, which gives the lower bound of $\sigma_{\lambda}(\mathbb {RP}^{n-1}\times S^1)$.

Next, we generalize the Kobayashi's connected-sum inequality \cite{Kobayashi1987/88} and\\
Schwartz's result \cite{schwartz2009monotonicity} to $\lambda$-Yamabe invariants of compact manifolds (with or without boundary).

Combining these with Sun's continuity results \cite{sun2017yamabe}, we complete the proof of Theorem \ref{maintheorem}.

\subsection*{Organization of the paper} In Section $2$, we set up some notations and give the definition of the $\lambda$-Yamabe invariants.

Section $3$ is devoted to giving lower bounds of $\lambda$-invariants for $\mathbb{RP}^n$ and $\mathbb{RP}^{n-1}\times S^1$.

In Section $4$, we generalize the Kobayashi's connected-sum inequality and Schwartz's result to $\lambda$-Yamabe invariants of compact manifolds (with or without boundary), and give a new proof of $\sigma(S^{n-1}\times S^1)=\sigma(S^n)$ for any $n\geq 2$.

Section $5$ is devoted to the proof of the continuity of the $\lambda$-Yamabe constants in $\lambda$. We adopt a different but equivalent constrain condition from Sun's result \cite{sun2017yamabe}, and rewrite his proof.

In Section $6$, we complete the proof of the Main Theorem \ref{maintheorem}. There are two ingredients in the proof, one is the lower bound of $\lambda$-Yamabe invariants for $\mathbb{RP}^n$ and $\mathbb{RP}^{n-1}\times S^1$, which are proven in Section $3.1$ and Section $3.2$; the other is the generalized Kobayashi's connected-sum inequality, which is stated and proven in Section 5.

Finally, Section $7$ provides other examples and applications of the generalized Kobayashi's connected-sum inequality. 
\vspace{0.5em}

\leftline{\textbf{Acknowledgement:}} I would like to thank my advisor Xin Zhou for
suggesting this problem and for many helpful discussions.
\section{Preliminaries}
Let $(M^n, g)$ be a $n$-dimensional compact Riemannian manifold with (possibly empty) boundary $\partial M$. We adopte the definitions in\cite{Escobar96} as follows.

For any $u\in H^1(M,g)$, define the energy of $u$, $E_M(u)$, by
\[
E_M(u)=\int_{M}\frac{4(n-1)}{n-2}|\nabla u|_g^2dV_g+\int_{M}R_gu^2dV_g+2(n-1)\int_{\partial M}H_gu^2d\sigma_g,
\]
where $H_g(x)$ is the mean curvature of the boundary at $x\in\partial M$, $dV_g$ and $d\sigma_g$ represent the Riemannian measure on $M$ and $\partial M$ induced by the metric $g$.

Given any $(a,b)\in \mathbb R_{\geq 0}\times\mathbb R\setminus\{(0,0)\}$, define the constraint set
\[
C_{a,b}(M,g):=\{u\in H^1(M,g): a\int_{M}|u|^pdV_g+b\int_{\partial M}|u|^qd\sigma_g=1, u>0\},
\]
where $p=2n/(n-2)$, $q=2(n-1)/(n-2)$.
\begin{defn}
The generalized Yamabe constant for a compact Riemannian manifold $(M^n,g)$ is defined as
\[
Y_{a,b}(M,\partial M, [g])=\inf_{u\in C_{a,b}(M,g)}E_M(u).
\]
\end{defn}
\begin{remark}
Both the energy functional and the constraint set depend on the metric $g$.
\end{remark}

Similarly, we can generalize the definition of Yamabe invariant as
\begin{defn}
The generalized Yamabe invariant is defined as
\[
\sigma_{a,b}(M):=\sup_{[g]\in \mathcal C}Y_{a,b}(M,\partial M,[g]),
\]
where $\mathcal C$ is the set of all conformal classes on $M$.
\end{defn}

In this paper, we focus on the cases when $a, b$ are both non-negative, and define the $\lambda$-Yamabe constant and $\lambda$-Yamabe invariant as
\begin{defn}\label{lambdayamabe}
\[
Y_{\lambda}(M,\partial M, [g]):=Y_{\lambda,1-\lambda}(M,\partial M,[g])\quad \sigma_{\lambda}(M):=\sigma_{\lambda,1-\lambda}(M),\quad \lambda\in [0,1].
\]
\end{defn}
We call $Y_{\lambda}$ the $\lambda$-Yamabe constant and $\sigma_{\lambda}$ the $\lambda$-Yamabe invariant.

\section{Lower Bound Estimate}
In this section, we give the lower bounds of the $\lambda$-Yamabe invariants for $\mathbb{RP}^n$ and $\mathbb{RP}^{n-1}\times S^1$ respectively.
\subsection{Lower bound of \texorpdfstring{$\mathbb{RP}^3$}{}}

We first introduce a lemma to bound the Yamabe invariant of a closed manifold from below by the Yamabe invariant of a compact manifold with boundary. As an application, we can derive the lower bound of $\sigma_{\lambda}(\mathbb{RP}^n)$.
\begin{lemma}
Assume that $\lambda\neq 0$. Suppose that $(M^n,g)$ is a closed Riemannian manifold with non-negative $\lambda$-Yamabe constant, and $\Sigma^{n-1}\subset M^n$ is a closed embedded minimal surface, then $N=M\setminus \Sigma$ is a compact manifold with boundary, and the follwoing inequality holds
\begin{equation}
Y_{\lambda}(M,[g])\geq Y_{\lambda}(N,\partial N,[g]).
\end{equation}
\end{lemma}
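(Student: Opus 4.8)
The plan is to turn a near-minimizing competitor for $Y_\lambda(M,[g])$ on the closed manifold into an admissible competitor for $Y_\lambda(N,\partial N,[g])$ on the cut manifold, at no increase in energy. Fix $\varepsilon>0$ and choose $u\in C_{\lambda,1-\lambda}(M,g)$ with $E_M(u)\le Y_\lambda(M,[g])+\varepsilon$; since $\partial M=\emptyset$, admissibility for $M$ reads $\lambda\int_M|u|^p\,dV_g=1$ (this is also where $\lambda\neq 0$ is needed, since for $\lambda=0$ the constraint set on a closed manifold is empty). Because $\Sigma$ has $n$-dimensional measure zero, restricting $u$ to $N=M\setminus\Sigma$ leaves all bulk integrals unchanged, so $\int_N|u|^p\,dV_g=\int_M|u|^p\,dV_g$ and the Dirichlet and scalar-curvature terms of $E_N(u)$ agree with those of $E_M(u)$. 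The one genuinely new object is the boundary integral over $\partial N$; I would first record that $u|_N\in H^1(N,g)$ and that its trace lies in $L^q(\partial N)$ by the critical Sobolev trace embedding, so that $\int_{\partial N}|u|^q\,d\sigma_g$ is finite.

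The key geometric input is the minimality of $\Sigma$. Since $\partial N$ is (a cover of) $\Sigma$, its mean curvature $H_g$ vanishes identically, so the boundary term $2(n-1)\int_{\partial N}H_g u^2\,d\sigma_g$ in $E_N$ is zero and $E_N(u)=E_M(u)$. However, $u$ itself is not admissible for $N$: evaluating the $N$-constraint on $u$ gives $\lambda\int_N|u|^p\,dV_g+(1-\lambda)\int_{\partial N}|u|^q\,d\sigma_g=1+(1-\lambda)\int_{\partial N}|u|^q\,d\sigma_g\ge 1$, so the restriction overshoots the constraint. I would correct this by a scaling $u\mapsto tu$ with $t\in(0,1]$. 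Writing $A=\int_M|u|^p\,dV_g$ and $B=\int_{\partial N}|u|^q\,d\sigma_g$, the map $t\mapsto\lambda t^pA+(1-\lambda)t^qB$ is continuous and strictly increasing on $[0,\infty)$, vanishes at $t=0$, and equals $1+(1-\lambda)B\ge 1$ at $t=1$; by the intermediate value theorem there is $t_\ast\in(0,1]$ with $t_\ast u\in C_{\lambda,1-\lambda}(N,g)$ (positivity being inherited from $u>0$ on $M$).

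It remains to compare energies. Since $E_N$ is homogeneous of degree two in $u$, we have $E_N(t_\ast u)=t_\ast^2E_N(u)=t_\ast^2E_M(u)$. Here the hypothesis that the $\lambda$-Yamabe constant is non-negative enters decisively: every admissible competitor satisfies $E_M(u)\ge Y_\lambda(M,[g])\ge 0$, so because $t_\ast\le 1$ we get $t_\ast^2E_M(u)\le E_M(u)$. Hence
\[
Y_\lambda(N,\partial N,[g])\le E_N(t_\ast u)=t_\ast^2E_M(u)\le E_M(u)\le Y_\lambda(M,[g])+\varepsilon,
\]
and letting $\varepsilon\to 0$ yields the claim. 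I expect the main difficulty to be twofold and essentially soft. First, justifying trace regularity so that the boundary integral in the $N$-constraint is meaningful and finite for a merely $H^1$ competitor, and that $\partial N$ is correctly identified as a (one- or two-sided) cover of $\Sigma$ on which $H_g\equiv 0$. Second, the sign issue at the scaling step: downscaling by $t_\ast\le 1$ only decreases the energy because that energy is non-negative, so the argument genuinely requires the non-negativity hypothesis and would fail for a competitor of negative energy — which is precisely why the lemma assumes a non-negative $\lambda$-Yamabe constant.
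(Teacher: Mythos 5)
Your proposal is correct and follows essentially the same route as the paper's own proof: restrict an admissible competitor from $M$ to $N$, use the minimality of $\Sigma$ to annihilate the boundary term so that $E_N(u)=E_M(u)$, rescale by a constant $t_\ast\le 1$ to restore the constraint on $N$, and invoke the non-negativity of $Y_\lambda(M,[g])$ to ensure the downscaling does not increase the energy. Your extra attention to trace regularity and the intermediate-value argument for the existence of $t_\ast$ simply fills in details the paper leaves implicit; the $\varepsilon$-near-minimizer phrasing versus the paper's infimum over all competitors is a cosmetic difference.
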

\begin{proof}
For any $u\in C_{\lambda,1-\lambda}(M,g)$, we have
\begin{displaymath}
\lambda \int_{N}|u|^pdV_g+(1-\lambda)\int_{\partial N}|u|^qd\sigma_g=\lambda\int_{M}|u|^pdV_g+(1-\lambda)\int_{\partial N}|u|^qd\sigma_g\geq 1.
\end{displaymath}
Suppose $cu\in C_{\lambda,1-\lambda}(N^n,g)$ where $c>0$ is a constant, then $c\leq 1$. With the fact that $Y_{\lambda}(M,[g])\geq 0$, and $\partial N$ is minimal,  we obtain
\begin{displaymath}
\begin{split}
E_{M}(u)&=\int_{M}\frac{4(n-1)}{n-2}|\nabla u|_g^2dV_g+\int_{M}R_gu^2dV_g\\
&=\int_{N}\frac{4(n-1)}{n-2}|\nabla u|_g^2dV_g+\int_{N}R_gu^2dV_g\\
&=E_{N}(u)\\
&\geq E_{N}(cu)\\
&\geq Y_{\lambda}(N,\partial N,[g]),\quad \forall u\in C_{\lambda,1-\lambda}(M,g),
\end{split}
\end{displaymath}
thus
\begin{equation}
Y_{\lambda}(M,[g])=\inf_{u\in C_{\lambda,1-\lambda}(M)}E_M(u)\geq Y_{\lambda}(N,\partial N,[g]).
\end{equation}
\end{proof}
Let $(M^n,g)$ be $(\mathbb{RP}^n,g_0)$, and $g_0$ be the standard round metric induced on $\mathbb{RP}^n$, then there exists a totally geodesic $\mathbb{RP}^{n-1}$, such that $(\mathbb{RP}^n\setminus \Sigma,g_0)=(S^n_+,g_0)$. We obtain the lower bound
\begin{equation}
\sigma_{\lambda}(\mathbb{RP}^n)\geq Y_{\lambda}(\mathbb{RP}^n,[g_0])\geq Y_{\lambda}(S^n_+,[g_0])=\sigma_\lambda(S^n_+).
\end{equation}

\subsection{Lower bound of \texorpdfstring{$\sigma_{\lambda}(\mathbb{RP}^{n-1}\times S^1)$}{}}
Given a closed manifold $(M,g)$ and its $k$-fold Riemannian covering $(M_k,g_k)$, we can bound $Y_{\lambda}(M,[g])$ from below by $Y_{\lambda}(M_k,g_k)$. As an application, we give the lower bound of $\sigma_{\lambda}(\mathbb{RP}^{n-1}\times S^1)$.
\begin{lemma}\label{kfoldcovering}
Suppose $(M_k,g_k)$ is a $k$-fold Riemannian covering of a closed Riemannian manifold $(M,g)$, then we have
\begin{equation}
Y_{\lambda}(M,[g])\geq \frac{Y_{\lambda}(M_k,[g_k])}{k^{2/n}}.
\end{equation}
\end{lemma}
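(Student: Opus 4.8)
The plan is to exploit the covering map $\pi\colon M_k\to M$ by pulling back competitors. Since both $M$ and $M_k$ are closed, the boundary terms in $E$ vanish and, for $\lambda>0$, the constraint set reduces to $C_{\lambda,1-\lambda}(M,g)=\{u>0:\lambda\int_M|u|^p\,dV_g=1\}$. Given any $u\in C_{\lambda,1-\lambda}(M,g)$, I would set $\tilde u=u\circ\pi\in H^1(M_k,g_k)$, which is again positive.

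The key step is the scaling behaviour under $\pi$. Because $\pi$ is a local isometry and the covering is $k$-sheeted, pulling back multiplies every integral of a $\pi$-invariant integrand by $k$; explicitly $\int_{M_k}|\tilde u|^p\,dV_{g_k}=k\int_M|u|^p\,dV_g$, $\int_{M_k}|\nabla\tilde u|_{g_k}^2\,dV_{g_k}=k\int_M|\nabla u|_g^2\,dV_g$, and $\int_{M_k}R_{g_k}\tilde u^2\,dV_{g_k}=k\int_M R_g u^2\,dV_g$, using $R_{g_k}=R_g\circ\pi$. Hence $E_{M_k}(\tilde u)=kE_M(u)$ and $\lambda\int_{M_k}|\tilde u|^p\,dV_{g_k}=k$.

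Next I would renormalize so as to land in the constraint set on $M_k$: set $c=k^{-1/p}$, so that $c\tilde u\in C_{\lambda,1-\lambda}(M_k,g_k)$. Since $E$ is homogeneous of degree $2$, $E_{M_k}(c\tilde u)=c^2E_{M_k}(\tilde u)=k^{-2/p}\cdot kE_M(u)=k^{\,1-2/p}E_M(u)=k^{2/n}E_M(u)$, where the final equality uses $p=2n/(n-2)$, hence $1-2/p=2/n$. By the definition of the $\lambda$-Yamabe constant, $E_{M_k}(c\tilde u)\ge Y_\lambda(M_k,[g_k])$, so $E_M(u)\ge Y_\lambda(M_k,[g_k])/k^{2/n}$ for every $u\in C_{\lambda,1-\lambda}(M,g)$. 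Taking the infimum over $u$ yields the claimed inequality.

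The only point requiring care is the scaling identity for the integrals, which rests on the elementary fact that a $k$-fold Riemannian cover decomposes into $k$ isometric sheets over any evenly covered set; a partition of unity subordinate to such a cover makes this precise and simultaneously confirms $\tilde u\in H^1(M_k,g_k)$. I note that the argument is sign-agnostic: it uses only the defining infimum and the positive homogeneity of $E$, so it applies whether or not $Y_\lambda$ is non-negative, and in particular needs none of the hypotheses of the preceding lemma.
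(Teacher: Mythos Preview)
Your proof is correct and follows essentially the same approach as the paper: lift a competitor $u$ on $M$ to $\tilde u=u\circ\pi$ on $M_k$, use that all integrals multiply by $k$ under the $k$-fold Riemannian cover, rescale by $k^{-1/p}$ to land in the constraint set on $M_k$, and read off the inequality from the homogeneity of $E$ together with $1-2/p=2/n$. Your write-up is in fact more explicit than the paper's about the renormalization constant and the implicit restriction to $\lambda>0$ (needed so that the constraint set on a closed manifold is nonempty).
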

\begin{proof}
For any $u\in C_{\lambda,1-\lambda}(M)$, we could lift $u$ to $(M_k,g_k)$, and denote it as $u_k$. 

Then
\begin{displaymath}
\lambda \int_{M_k}|u_k|^pdV_{g_k}=k,
\end{displaymath}
and 
\begin{displaymath}
E_{M_k}(u_k)\geq k^{\frac{2}{p}}Y_{\lambda}(M_k,[g_k]).
\end{displaymath}
Since $(M_k,g_k)$ is a $k$-fold Riemannian covering of $(M,g)$, we have
\begin{displaymath}
E_M(u)=\frac{1}{k}E_{M_k}(u_k)\geq \frac{Y_{\lambda}(M_k,[g_k])}{k^{2/n}}.
\end{displaymath}
Thus
\begin{displaymath}
Y_{\lambda}(M,[g])\geq \frac{Y_{\lambda}(M_k,[g_k])}{k^{2/n}}.
\end{displaymath}
\end{proof}
\begin{remark}
As a direct corollary, we obtain the lower bound of the Yamabe invariant of Lens space
\begin{displaymath}
\sigma_1(L(p,q))\geq \frac{\sigma_1(S^3)}{p^{2/3}}.
\end{displaymath}
\end{remark}

To show the lower bound of $\sigma_{\lambda}(\mathbb{RP}^{n-1}\times S^1)$, we need to review the proof of $\sigma_{\lambda}(S^{n-1}\times S^1)=\sigma_{\lambda}(S^n)$, which was given independently by Schoen \cite{schoen1989variational} and Kobayashi \cite{Kobayashi1987/88}. More recently, Akustagawa, Florit and Peaten \cite{akutagawa2007yamabe} gave a new proof.  

Here we use Schoen's construction, where he constructed
\begin{equation}
\lim_{l\to\infty}Y_{\lambda}(S^{n-1}\times S^1,[g_l])=\sigma_{\lambda}(S^n),
\end{equation}
and $g_l|_{S^{n-1}\times\{t\}}$ is the standard round metric for any $t\in S^1$. Then $(S^{n-1}\times S^1,g_l)$ can be viewed as a $2$-fold Riemannian covering of $(\mathbb{RP}^{n-1}\times S^1,g_l)$, with the help of Lemma \ref{kfoldcovering}, we can deduce that
\begin{equation}
\sigma_{\lambda}(\mathbb{RP}^{n-1}\times S^1)\geq \lim_{l\to\infty}\frac{1}{2^{2/n}}Y_{\lambda}(S^{n-1}\times S^1,[g_l])=\frac{1}{2^{2/n}}\sigma_{\lambda}(S^n).
\end{equation}
It is known by \cite{Escobar96} that 
\begin{equation}
\sigma_{\lambda}(S^n)=Y_{\lambda}(S^n,[g_0]),\quad \sigma_{\lambda}(S^n_+)=Y_{\lambda}(S^n_+,[g_0]).
\end{equation}
For any $u\in C_{\lambda, 1-\lambda}(S^n_+)$, we extend $u$ to $(S^n,g_0)$ by reflection, and denote it by $\tilde{u}$, then we have $\tilde{u}\in \text{Lip}(S^n)$, $E_{S^n_+}(u)=\frac{1}{2}E_{S^n}(\tilde{u})$, and by straightforward computation, we get
\begin{equation}
\frac{1}{2^{2/n}}\sigma_{\lambda}(S^n)\geq \sigma_{\lambda}(S^n_+).
\end{equation}
Combining the above results, we obtain
\begin{equation}
\sigma_{\lambda}(\mathbb{RP}^{n-1}\times S^1)\geq \sigma_{\lambda}(S^n_+).
\end{equation}

\section{Generalized Kobayashi's inequality}
Kobayashi \cite{Kobayashi1987/88} established a monotonicity formula of Yamabe invariant over connected sums of closed manifolds. Here, we extend it to compact manifolds (with or without boundary) and establish the results for $\lambda$-Yamabe invariant.
\begin{lemma}[Generalized Kobayashi's Lemma]\label{generalizedkobayshilemma} Suppose $(M_1,g_1)$ and $(M_2,g_2)$
are two $n$-dimensional compact Riemanniamanifolds, and we further assume that
 $Y_{\lambda}(M_i,[g_i]) \geq 0$ for some $\lambda\in (0,1]$, $i=1,2$, then we obtain
\begin{displaymath}
Y_{\lambda}(M_1\sqcup M_2, [g_1\sqcup g_2])\geq \min\{Y_{\lambda}(M_1,[g_1]), Y_{\lambda}(M_2,[g_2])\}.
\end{displaymath}
\end{lemma}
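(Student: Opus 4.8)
The plan is to reduce the disjoint-union constant to the individual constants by the standard device of restricting a test function to each component and rescaling it back into the respective constraint set. Given any $u\in C_{\lambda,1-\lambda}(M_1\sqcup M_2, g_1\sqcup g_2)$, write $u_i=u|_{M_i}$; since the energy functional is a sum of integrals over $M_i$ and $\partial M_i$, it splits as $E_{M_1\sqcup M_2}(u)=E_{M_1}(u_1)+E_{M_2}(u_2)$, and the constraint splits as $A_1+A_2=1$, where $A_i=\lambda\int_{M_i}|u_i|^p\,dV_{g_i}+(1-\lambda)\int_{\partial M_i}|u_i|^q\,d\sigma_{g_i}$. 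Because $\lambda\neq 0$ and $u>0$, each $A_i>0$.

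Next I would rescale. For each $i$ choose $c_i>0$ so that $c_iu_i\in C_{\lambda,1-\lambda}(M_i,g_i)$, i.e. $c_i^p\alpha_i+c_i^q\beta_i=1$, where $\alpha_i=\lambda\int_{M_i}|u_i|^p$ and $\beta_i=(1-\lambda)\int_{\partial M_i}|u_i|^q$, so that $\alpha_i+\beta_i=A_i$. The left-hand side is continuous and strictly increasing in $c_i$, ranges over $(0,\infty)$, and equals $A_i\le 1$ at $c_i=1$, so a unique $c_i\ge 1$ exists. Using the $2$-homogeneity of the energy, $E_{M_i}(u_i)=c_i^{-2}E_{M_i}(c_iu_i)\ge c_i^{-2}Y_\lambda(M_i,[g_i])$.

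The heart of the argument is to control $c_i^{-2}$ from below in terms of $A_i$. Here the two distinct exponents $p=2n/(n-2)$ and $q=2(n-1)/(n-2)$, with $p>q$, are the main obstacle: unlike Kobayashi's closed case there is no single power relating $c_i$ to $A_i$, so the naive scaling identity fails. I would resolve this by plugging $c=A_i^{-1/q}$ into the increasing function $c\mapsto c^p\alpha_i+c^q\beta_i$; since $A_i\le 1$ and $p/q>1$ one checks $A_i^{-p/q}\alpha_i+A_i^{-1}\beta_i\ge A_i^{-1}(\alpha_i+\beta_i)=1$, which forces $c_i\le A_i^{-1/q}$, hence $c_i^{-2}\ge A_i^{2/q}$. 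Combined with $Y_\lambda(M_i,[g_i])\ge 0$ this yields $E_{M_i}(u_i)\ge A_i^{2/q}Y_\lambda(M_i,[g_i])$.

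Finally I would assemble the pieces. Setting $m=\min\{Y_\lambda(M_1,[g_1]),Y_\lambda(M_2,[g_2])\}\ge 0$,
\[
E_{M_1\sqcup M_2}(u)\ge A_1^{2/q}Y_\lambda(M_1,[g_1])+A_2^{2/q}Y_\lambda(M_2,[g_2])\ge m\,(A_1^{2/q}+A_2^{2/q}).
\]
Since the exponent $2/q=(n-2)/(n-1)$ lies in $(0,1)$, we have $t^{2/q}\ge t$ for $t\in[0,1]$, so $A_1^{2/q}+A_2^{2/q}\ge A_1+A_2=1$; thus $E_{M_1\sqcup M_2}(u)\ge m$. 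Taking the infimum over all admissible $u$ gives the claimed inequality. The only genuinely nontrivial point is the exponent bookkeeping in the third step, which crucially uses both the sign hypothesis $Y_\lambda(M_i,[g_i])\ge 0$ and $\lambda\neq 0$.
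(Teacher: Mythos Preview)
Your proof is correct and follows essentially the same route as the paper: split the test function, rescale each piece into the constraint set on $M_i$, and use the bound $c_i\le A_i^{-1/q}$ (which the paper obtains from the equivalent observation $\alpha\,C_1(\alpha)^q\le 1\le \alpha\,C_1(\alpha)^p$) together with concavity of $t\mapsto t^{2/q}$ on $[0,1]$. Your write-up is in fact cleaner than the paper's, which contains evident typos in the exponents ($\tfrac{n-2}{n-2}$ and $\tfrac{n-1}{n-2}$ where $\tfrac{n-2}{n-1}=2/q$ is intended).
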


\begin{remark}
If we require both $M_1$ and $M_2$ to be manifolds with non-empty boundaries, \ref{generalizedkobayshilemma} holds for $\lambda\in [0,1]$.
\end{remark}

\begin{proof}
Denote $u=[u_1,u_2]\in C^{\infty}(M_1\sqcup M_2)$, where $u_1\in C^{\infty}(M_1)$, $u_2\in C^{\infty}(M_2)$.

Also, suppose $u\in C_{\lambda,1-\lambda}(M_1\sqcup M_2)$, i.e.
\begin{displaymath}
\begin{split}
\lambda\int_{M_1}|u_1|^pdV_{g_1}&+(1-\lambda)\int_{\partial M_1}|u_1|^qd\sigma_{g_1}\\
&+\lambda\int_{M_1}|u_2|^pdV_{g_2}+(1-\lambda)\int_{\partial M_2}|u_2|^qd\sigma_{g_2}=1.
\end{split}
\end{displaymath}

Let 
\begin{displaymath}
\alpha=\lambda\int_{M_1}|u_1|^pdV_{g_1}+(1-\lambda)\int_{\partial M_1}|u_1|^qd\sigma_{g_1},
\end{displaymath}
then
\begin{displaymath}
1-\alpha=\lambda\int_{M_2}|u_2|^pdV_{g_2}+(1-\lambda)\int_{\partial M_2}|u_2|^qd\sigma_{g_2}.
\end{displaymath}
Fix $\alpha$, and suppose $C_1(\alpha)$ is the constant such that $C_1(\alpha)u_1\in C_{\lambda,1-\lambda}(M_1)$, then we have
\begin{displaymath}
E_{M_1}(C_1(\alpha)u_1)\geq Y_{\lambda}(M_1,[g_1]),
\end{displaymath}
and
\begin{displaymath}
\alpha C_1(\alpha)^q\leq 1\leq \alpha C_1(\alpha)^p.
\end{displaymath}
The above estimates imply that
\begin{displaymath}
E_{M_1}(u_1)=\frac{E_{M_1}(C_1(\alpha))}{C_1(\alpha)^2}\geq \alpha^{\frac{n-2}{n-2}}Y_{\lambda}(M_1,[g_1]).
\end{displaymath}
Similarly, we could deduce 
\begin{displaymath}
E_{M_2}(u_2)\geq (1-\alpha)^{\frac{n-2}{n-1}}Y_{\lambda}(M_2,[g_2]).
\end{displaymath}
Combining the above results, we know 
\begin{displaymath}
E_{M_1\sqcup M_2}(u)=E_{M_1}(u_1)+E_{M_2}(u_2)\geq f(\alpha),
\end{displaymath}
where $f(\alpha)=Y_{\lambda}(M_1,[g_1])\alpha^{\frac{n-1}{n-2}}+Y_{\lambda}(M_2,[g_2])\alpha^{\frac{n-1}{n-2}}$. 

By straightforward computations, we know
\begin{displaymath}
f(\alpha)\geq \min\{Y_{\lambda}(M_1,[g_1]), Y_{\lambda}(M_2,[g_2])\}, \forall \alpha\in [0,1].
\end{displaymath}
This implies
\begin{displaymath}
Y_{\lambda}(M_1\sqcup M_2,[g_1\sqcup g_2])\geq \min\{Y_{\lambda}(M_1,\partial M_1, [g_1]), Y_{\lambda}(M_2,\partial M_2,[g_2])\}.
\end{displaymath}
\end{proof}

Now we generalize the typical Kobayashi's inequality to $\lambda$-Yamabe invariant, and to manifolds with or without boundary.

\begin{thm}[Generalized Kobayshi's inequality]\label{generalizedkobayshiinequality}
Suppose $M_1$ and $M_2$ are compact manifolds (with or without boundary) of dimension $n\geq 3$, then $\forall \lambda\in (0,1]$, we have
\begin{equation}
\sigma_{\lambda}(M_1\# M_2)\geq \sigma_{\lambda}(M_1\sqcup M_2).
\end{equation}
\end{thm}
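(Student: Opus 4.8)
The plan is to prove the connected-sum inequality $\sigma_\lambda(M_1\#M_2)\geq\sigma_\lambda(M_1\sqcup M_2)$ by a surgery/gluing argument that is standard in Kobayashi's original work but must be adapted to the $\lambda$-Yamabe setting. The strategy is to start from near-optimal conformal classes on $M_1$ and $M_2$ realizing (up to $\epsilon$) the Yamabe constants, and to build a family of conformal classes on the connected sum $M_1\#M_2$ whose $\lambda$-Yamabe constants converge to $\min\{Y_\lambda(M_1,[g_1]),Y_\lambda(M_2,[g_2])\}$ as the neck pinches off. Taking the supremum over conformal classes and invoking Lemma \ref{generalizedkobayshilemma} for the disjoint union then yields the claim. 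Since $\sigma_\lambda$ is a supremum of Yamabe constants, it suffices to show that for every pair of metrics $g_1,g_2$ with $Y_\lambda(M_i,[g_i])\geq 0$, one can construct metrics on $M_1\#M_2$ whose $\lambda$-Yamabe constant is arbitrarily close to $\min\{Y_\lambda(M_1,[g_1]),Y_\lambda(M_2,[g_2])\}$.

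First I would reduce to the nonnegative case: if either $\sigma_\lambda(M_i)<0$ the inequality is either trivial or follows from the negative-case analysis (which Akutagawa–Botvinnik \cite{akutagawa2018relative} handle via the gluing theorem), so I focus on the situation where we may apply Lemma \ref{generalizedkobayshilemma}. Next I would realize the connected sum geometrically: pick points $p_i\in M_i$ in the interior (away from any boundary), excise small geodesic balls $B_\delta(p_i)$, and glue the resulting boundary spheres through a thin cylindrical neck modeled on $S^{n-1}\times[-L,L]$ carrying a scalar-flat or nearly scalar-flat product metric. The key analytic input is that as the neck becomes long and thin, its contribution to the numerator of the energy functional $E_{M_1\#M_2}(u)$ becomes negligible, while a test function $u$ can be taken to equal near-optimal minimizers $u_i$ on the two bulk pieces, transitioning smoothly across the neck. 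The constraint-set normalization $a\int|u|^p + b\int_{\partial}|u|^q=1$ must be respected, and I would track how the mass $\alpha$ splits between the two sides exactly as in the proof of Lemma \ref{generalizedkobayshilemma}.

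The core estimate is to show that for any test function $u$ on $M_1\#M_2$ in the constraint set, the energy $E_{M_1\#M_2}(u)$ differs from $E_{M_1\sqcup M_2}$ of the truncated restrictions by an error that vanishes as $\delta\to 0$ and $L\to\infty$. Concretely, one cuts off $u$ on the neck using a logarithmic cutoff (the standard device for handling the conformally critical Sobolev exponent: $|\nabla\eta|^2$ integrates to something like $1/\log(1/\delta)$ in the borderline case, or a small power of $\delta$ in higher dimensions), producing functions $u_1,u_2$ supported on $M_1\setminus B_\delta$ and $M_2\setminus B_\delta$ respectively. One then verifies that $E_{M_i}(u_i)\leq E_{M_i\#M_2}(u)\big|_{M_i}+o(1)$ and that the constraint is nearly preserved, so that Lemma \ref{generalizedkobayshilemma} applies to the pair $(u_1,u_2)$ and gives the lower bound $\min\{Y_\lambda(M_1,[g_1]),Y_\lambda(M_2,[g_2])\}-o(1)$. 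Taking $\delta\to 0$ and supremizing over conformal classes completes the proof.

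The main obstacle I anticipate is the control of the neck contribution in the presence of the boundary terms and the $\lambda$-dependence. In Kobayashi's closed-manifold argument only the bulk scalar-curvature integral and gradient term appear, but here the energy carries a mean-curvature boundary integral $2(n-1)\int_{\partial M}H_g u^2\,d\sigma_g$ and the constraint mixes a bulk $L^p$ term with a boundary $L^q$ term. I must ensure that the surgery is performed in the interior so that the neck introduces no new boundary, and that the cutoff does not disturb the boundary normalization; the delicate point is that the splitting parameter $\alpha$ governing how the constraint mass divides across the neck must be shown to converge to a value for which $f(\alpha)\geq\min\{Y_\lambda(M_1),Y_\lambda(M_2)\}$, exactly the convexity-type bound established in Lemma \ref{generalizedkobayshilemma}. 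The requirement $\lambda\in(0,1]$ (rather than $\lambda=0$) is presumably needed precisely so that the bulk constraint term is nondegenerate, guaranteeing that a genuine $L^p$-normalization survives on each piece after cutting the neck.
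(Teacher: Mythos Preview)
Your overall architecture---perform surgery at interior points, take a near-minimizer on the connected sum, and transport it to the pieces---matches the paper's. However, there is a genuine logical gap in how you assemble the conclusion, and a technical difference in the cutoff mechanism.

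\textbf{The logical gap.} You aim to prove $\sigma_\lambda(M_1\#M_2)\ge\sigma_\lambda(M_1\sqcup M_2)$, but your argument only reaches $\sigma_\lambda(M_1\#M_2)\ge\min\{\sigma_\lambda(M_1),\sigma_\lambda(M_2)\}$. After cutting a test function $u$ on $M_1\#M_2$ into $(u_1,u_2)$, you invoke Lemma~\ref{generalizedkobayshilemma} to bound the energy below by $\min\{Y_\lambda(M_1,[g_1]),Y_\lambda(M_2,[g_2])\}$. But Lemma~\ref{generalizedkobayshilemma} also bounds $Y_\lambda(M_1\sqcup M_2,[g_1\sqcup g_2])$ below by the \emph{same} minimum, so you end up with two quantities both bounded below by the min, with no comparison between them. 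In the positive case $Y_\lambda(M_1\sqcup M_2)$ is strictly larger than the minimum (in the closed $\lambda=1$ case one has the Kobayashi formula $Y^{n/2}=Y_1^{n/2}+Y_2^{n/2}$), so the detour through the minimum genuinely loses information. The paper avoids this entirely: it never invokes Lemma~\ref{generalizedkobayshilemma} in the proof of Theorem~\ref{generalizedkobayshiinequality}. Instead it regards the transported function $F_l=(u_1,u_2)$ directly as a single test function on $(M_1\sqcup M_2,g_1\sqcup g_2)$, checks that its constraint value is $\ge 1$, and concludes $Y_\lambda(M_1\sqcup M_2,[g])\le E_{M_1\sqcup M_2}(F_l)\le Y_\lambda(\bar M,[g_l])+B/l$ directly. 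Your reduction to the nonnegative case (and the appeal to Akutagawa--Botvinnik for the negative case) is likewise unnecessary once you make this direct comparison.

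\textbf{The technical difference.} The paper works in the cylindrical conformal model: it replaces neighborhoods of $p_1,p_2$ by half-infinite cylinders $[0,\infty)\times S^{n-1}$, glues with a finite neck $[0,l]\times S^{n-1}$, and then uses the mean value theorem to locate a slice $\{t_l\}\times S^{n-1}$ on which $\int(|\nabla f_l|^2+f_l^2)<A/l$ for the near-minimizer $f_l$. Cutting at that slice and extending linearly to zero gives the transported function. Your logarithmic-cutoff scheme is the Euclidean-model analogue and can be made to work, but not ``for any test function $u$'' as you write: the error $\|u\|_{L^p}^2\|\nabla\eta\|_{L^n}^2$ in the gradient term is controllable, but you must also rule out that the near-minimizer carries an order-one portion of its $L^p$ mass on the annulus where the cutoff acts, otherwise the constraint normalization on the two pieces is destroyed. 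That requires exactly the kind of a~priori smallness-on-a-good-scale argument that the paper's mean-value-theorem step supplies; it does not come for free from the log cutoff alone.
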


\begin{remark}
Suppose both $M_1$ and $M_2$ are manifolds with boundary, then Lemma \ref{generalizedkobayshiinequality} holds for $\lambda\in[0,1]$.
\end{remark}

\begin{proof}
Let $M=M_1\sqcup M_2$, $\epsilon$ be an arbitrary positive number, then there exists a conformal class $[g]$ of $M$, such that 
\begin{equation}
Y_{\lambda}(M,\partial M, [g])\geq \sigma_{\lambda}(M)-\epsilon.
\end{equation}
Let $p_1\in M_1$, $p_2\in M_2$, be points in the interior of $M_1$ and $M_2$ respectively. We can also assume $[g]$ is conformally flat near $p_1$ and $p_2$; see \cite{Kobayashi1987/88}. Then there exists a function $\varphi\in C^{\infty}(M\setminus\{p_1,p_2\})$, such that $\tilde{g}=e^{\varphi}g$ is a complete metric of $M\setminus\{p_1,p_2\}$, the boundary of $M$ is minimal, and each of two ends is isometric to the standard half infinite cylinder $[0,\infty)\times S^{n-1}$. 

We write
\begin{displaymath}
(M\setminus\{p_1,p_2\},\tilde{g})=[0,\infty)\times S^{n-1}\cup (\tilde{M},\tilde{g})\cup [0,\infty)\times S^{n-1},
\end{displaymath}
where $\tilde{M}$ is the complement of the two cylinders.

We can glue $(\tilde{M},\tilde{g})$ and $[0,l]\times S^{n-1}$, the product of the interval of length $l$ with the unit $(n-1)$ sphere, along the boundaries to get a smooth Riemannina manifold $(\bar{M},g_l)$,
\begin{displaymath}
(\bar{M},g_l)=(\tilde{M},\tilde{g})\cup [0,l]\times S^{n-1}.
\end{displaymath}
Take $f_l\in C_{\lambda,1-\lambda}(\bar{M})$, such that $E_{\bar{M}}(f_l)\leq Y_{\lambda}(\bar{M},\partial \bar{M},[g_l])+\frac{1}{l+1}$.

We have the following lemma
\begin{lemma}
There exists $t_l\in [0,l]$, and a constant $A$ not depending on $l$, such that
\begin{equation}
\int_{\{t_l\}\times S^{n-1}}(|\nabla f_l|^2+f_l^2)dV_{S^{n-1}}<\frac{A}{l}.
\end{equation}
\end{lemma}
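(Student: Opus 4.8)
The plan is to trap the total $H^1$-energy of $f_l$ over the neck $[0,l]\times S^{n-1}$ between $0$ and a constant independent of $l$, and then produce the good slice by an averaging (pigeonhole) argument. The only genuinely analytic input is the uniform energy bound; the extraction of $t_l$ is pure measure theory.

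First I would fix an $l$-independent upper bound on $E_{\bar M}(f_l)$. Since $f_l$ is an almost-minimizer, $E_{\bar M}(f_l)\le Y_{\lambda}(\bar M,\partial\bar M,[g_l])+\tfrac{1}{l+1}$, so it suffices to bound $Y_{\lambda}(\bar M,\partial\bar M,[g_l])$ from above. For this I choose a single fixed test function $\psi$ with $\spt\psi\subset\tilde M$, away from the neck, normalized so that $\psi\in C_{\lambda,1-\lambda}(\bar M)$. Because the metric on $\spt\psi$ does not depend on $l$, the energy $E_{\bar M}(\psi)=:C_0$ and the normalization are unaffected by lengthening the neck, giving $E_{\bar M}(f_l)\le C_0+1=:C$ for all $l$.

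Next I would split the energy over $\tilde M$ and over the neck. The boundary $\partial\bar M$ is minimal, so the mean-curvature term drops out and
\[
E_{\bar M}(f_l)=I_{\tilde M}+I_{\mathrm{cyl}},\qquad
I_{\mathrm{cyl}}=\int_{[0,l]\times S^{n-1}}\Bigl(\tfrac{4(n-1)}{n-2}|\nabla f_l|^2+(n-1)(n-2)f_l^2\Bigr)\,dV_{g_l},
\]
since the neck carries the product metric whose scalar curvature is $(n-1)(n-2)>0$; in particular $I_{\mathrm{cyl}}\ge 0$ with both coefficients in its integrand positive. On the fixed compact piece $\tilde M$ the scalar curvature is bounded below by some $-K$, so $I_{\tilde M}\ge -K\int_{\tilde M}f_l^2$. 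Using $\lambda>0$, Hölder's inequality together with the constraint $\lambda\int_{\bar M}|f_l|^p\le 1$ bounds $\int_{\tilde M}f_l^2\le \mathrm{vol}(\tilde M)^{1-2/p}\lambda^{-2/p}=:C'$. Hence $I_{\tilde M}\ge -KC'$, and therefore
\[
I_{\mathrm{cyl}}=E_{\bar M}(f_l)-I_{\tilde M}\le C+KC'=:C_2,
\]
independent of $l$. Absorbing the positive constants $\tfrac{4(n-1)}{n-2}$ and $(n-1)(n-2)$ yields $\int_{[0,l]\times S^{n-1}}(|\nabla f_l|^2+f_l^2)\,dV_{g_l}\le C_3$ with $C_3$ independent of $l$.

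Finally, setting $\phi(t)=\int_{\{t\}\times S^{n-1}}(|\nabla f_l|^2+f_l^2)\,dV_{S^{n-1}}$ and applying Fubini gives $\int_0^l\phi(t)\,dt\le C_3$, so the average of $\phi$ over $[0,l]$ is at most $C_3/l$; thus some $t_l\in[0,l]$ satisfies $\phi(t_l)<A/l$ with $A=C_3+1$, which is the claim. The main obstacle is the first step: securing the $l$-independent upper bound on $Y_{\lambda}(\bar M,\partial\bar M,[g_l])$ by exhibiting a legitimate constraint-normalized test function untouched by the growing neck. Once the neck energy is trapped between $0$ and a fixed constant, the good slice follows immediately from the mean-value principle.
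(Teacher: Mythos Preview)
Your argument is correct and follows the same skeleton as the paper's proof: split $E_{\bar M}(f_l)$ into the fixed piece $\tilde M$ and the neck, use minimality of $\partial\bar M$ to kill the mean-curvature term, bound the $\tilde M$ contribution from below via H\"older and the constraint $\lambda\int|f_l|^p\le 1$ (this is where $\lambda>0$ enters), and then average over $[0,l]$ to find the good slice.

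The one genuine addition in your write-up is the first step, where you exhibit a fixed test function $\psi$ supported in $\tilde M$ to bound $Y_\lambda(\bar M,\partial\bar M,[g_l])$ uniformly in $l$. The paper's version simply leaves the right-hand side as $Y_\lambda(\bar M,[g_l])+\tfrac{1}{l+1}+A_1$ and invokes the Mean Value Theorem without saying why this quantity is bounded independently of $l$; implicitly it is relying on the universal upper bound $Y_\lambda\le\sigma_\lambda(S^n)$ (or $\sigma_\lambda(S^n_+)$ if $\partial\bar M\neq\emptyset$). Your explicit test-function argument is a cleaner way to close that loop. One minor caveat: the constraint set $C_{\lambda,1-\lambda}$ formally requires $u>0$, so a $\psi$ with $\mathrm{spt}\,\psi\subset\tilde M$ is not literally admissible; but since $E_M(|u|)\le E_M(u)$ and the constraint depends only on $|u|$, the infimum is unchanged if one drops the positivity requirement, so this is harmless.
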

\begin{proof}
By definition, 
\begin{equation}
\begin{split}
E_{\bar{M}}(f_l)=&\int_{\bar{M}}|\nabla f_l|^2dV_{g_l}+a(n)\int_{M_l}R_{g_l}f_l^2dV_{g_l}+b(n)\int_{\partial \bar{M}}H_{g_l}f_l^2d\sigma_{g_l}\\
=&\int_{\bar{M}}|\nabla f_l|^2dV_{g_l}+a(n)\int_{M_l}R_{g_l}f_l^2dV_{g_l}\\
=&\int_{\tilde{M}}|\nabla f_l|^2dV_{g_l}+a(n)\int_{\tilde{M}}R_{\tilde{g}}f_l^2dV_{g_l}\\
&+\int_{[0,l]\times S^{n-1}}|\nabla f_l|^2dV_{g_l}+a(n)n(n-1)\int_{[0,l]\times S^{n-1}}f_l^2dV_{g_l}\\
\geq& a(n)\min\{0,\min_{x\in\tilde{M}}R_{\tilde{g}}(x)\}\int_{\tilde{M}}f_l^2dV_{\tilde{g}}+\tilde{a}(n)\int_{[0,l]\times S^{n-1}}f_l^2dV_{g_l}\\
&+\int_{[0,l]\times S^{n-1}}|\nabla f_l|^2dV_{g_l}
\end{split}
\end{equation}
here we used the fact the boundary of $\bar{M}$ is minimal.

Also note that
\begin{displaymath}
\lambda\int_{\tilde{M}}|f_l|^pdV_{\tilde{g}}\leq 1,
\end{displaymath}
by Hölder's inequality, we know
\begin{displaymath}
\int_{\tilde{M}}f_l^2dV_{\tilde{g}}\leq \frac{1}{\lambda^{1-\frac{2}{n}}}\text{Vol}(\tilde{M})^{2/n}.
\end{displaymath}
This implies
\begin{displaymath}
\int_{[0,l]\times S^{n-1}}|\nabla f_l|^2dV_{g_l}+\int_{[0,l]\times S^{n-1}}f_l^2dV_{g_l}\leq Y_{\lambda}(\bar{M},[g_l])+\frac{1}{l+1}+A_1,
\end{displaymath}
by Mean Value Theorem, we complete the proof.
\end{proof}
Cut off $\bar{M}$ on the section $\{t_l\}\times S^{n-1}$, and attach 2 infinite cylinders to it, so that $(M\setminus \{p_1,p_2\},\tilde{g})$ reappears:
\begin{displaymath}
(M\setminus\{p_1,p_2\},\tilde{g})=[0,\infty)\times S^{n-1}\cup (\bar{M}\setminus \{t_l\}\times S^{n-1},g_l)\cup [0,\infty)\times S^{n-1}.
\end{displaymath}
Define a Lipshitz function on $\bar{M}\setminus\{p_1,p_2\}$ as
\begin{displaymath}
F_l(x)=\left\{\begin{array}{ccc}
f_l(x) & x\in \bar{M}\setminus \{t_l\}\times S^{n-1}\\
(1-t)\tilde{f_l}(y) & x=(t,y)\in [0,1]\times S^{n-1}\\
0 & x=(t,y)\in [1,\infty)\times S^{n-1}
\end{array}\right.
\end{displaymath}
where $\tilde{f_l}=f_l|_{\{t_l\}\times S^{n-1}}$.
Then we compute that
\begin{displaymath}
E_{M\setminus\{p_1,p_2\}}(F_l)\leq Y_{\lambda}(\bar{M},\partial\bar{M}, [g_l])+\frac{B}{l},
\end{displaymath}
where $B$ is constant independent of $l$.

Also, we have
\begin{displaymath}
\lambda\int_{M\setminus\{p_1,p_2\}}|F_l|^pdV_{\tilde{g}}+(1-\lambda)\int_{\partial M}|F_l|^qd\sigma_{\tilde{g}}\geq \lambda \int_{\bar{M}}|f_l|^pdV_{g_l}+(1-\lambda)\int_{\partial \bar{M}}|f_l|^q d\sigma_{g_l}=1
\end{displaymath}
which indicates that
\begin{displaymath}
Y_\lambda(M,\partial{M}, [g])\leq Y_{\lambda}(\bar{M},\partial\bar{M}, [g_l])+\frac{B}{l}
\end{displaymath}
After passing $l$ to $\infty$, we can obtain that
\begin{equation}
\sigma_{\lambda}(M)-\epsilon\leq Y_{\lambda}(M,\partial M, [g])\leq \sigma_{\lambda}(M_1\# M_2).
\end{equation}
Since $\epsilon$ can be arbitrarily small, we obtain that
\begin{displaymath}
\sigma_{\lambda}(M_1\sqcup M_2)\leq \sigma_{\lambda}(M_1\# M_2).
\end{displaymath}
As a direct corollary, suppose $\sigma_{\lambda}(M_1), \sigma_{\lambda}(M_2)\geq 0$, we have
\begin{equation}
\sigma_{\lambda}(M_1\# M_2)\geq \min\{\sigma_{\lambda}(M_1),\sigma_\lambda(M_2)\}.
\end{equation}
\end{proof}

Similarly, we can obtain
\begin{prop}\label{propofkobayashi}
Suppose $M^n$ is a compact manifold (with or wthout boundary) of dimension $n\geq 3$, and $\sigma_{\lambda}(M)\geq 0$, then $\forall \lambda\in (0,1]$, we have
\begin{equation}\label{selfconnectsum}
\sigma_{\lambda}(M\# (S^{n-1}\times S^1))\geq \sigma_{\lambda}(M).
\end{equation}
\end{prop}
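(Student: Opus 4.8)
The plan is to deduce Proposition \ref{propofkobayashi} as a direct application of the Generalized Kobayashi inequality (Theorem \ref{generalizedkobayshiinequality}) together with the Generalized Kobayashi Lemma (Lemma \ref{generalizedkobayshilemma}), rather than rerunning the cylinder-neck surgery argument from scratch. The key observation is that a self-connected-sum $M \# (S^{n-1}\times S^1)$ can be realized by connecting $M$ to a sphere factor in a controlled way. First I would invoke Theorem \ref{generalizedkobayshiinequality} to get
\begin{displaymath}
\sigma_{\lambda}(M \# (S^{n-1}\times S^1)) \geq \sigma_{\lambda}(M \sqcup (S^{n-1}\times S^1)).
\end{displaymath}
Then, since $S^{n-1}\times S^1$ is closed with $\sigma_{\lambda}(S^{n-1}\times S^1) = \sigma_{\lambda}(S^n) \geq 0$ (the nonnegativity being standard, and the equality being the result reproven in Section 4), and since $\sigma_{\lambda}(M) \geq 0$ by hypothesis, I would apply Lemma \ref{generalizedkobayshilemma} at the level of invariants to bound the disjoint-union invariant below by the minimum of the two factors.

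The main step is therefore to pass from the disjoint-union inequality to the desired clean bound $\sigma_{\lambda}(M \# (S^{n-1}\times S^1)) \geq \sigma_{\lambda}(M)$. The corollary at the end of the proof of Theorem \ref{generalizedkobayshiinequality} already records that when $\sigma_{\lambda}(M_1), \sigma_{\lambda}(M_2) \geq 0$ one has $\sigma_{\lambda}(M_1 \# M_2) \geq \min\{\sigma_{\lambda}(M_1), \sigma_{\lambda}(M_2)\}$. Applying this with $M_1 = M$ and $M_2 = S^{n-1}\times S^1$ gives
\begin{displaymath}
\sigma_{\lambda}(M \# (S^{n-1}\times S^1)) \geq \min\{\sigma_{\lambda}(M),\ \sigma_{\lambda}(S^{n-1}\times S^1)\}.
\end{displaymath}
To conclude, I would use that $\sigma_{\lambda}(S^{n-1}\times S^1) = \sigma_{\lambda}(S^n) \geq \sigma_{\lambda}(M)$ is not automatic in general, so instead I would argue directly: since $S^{n-1}\times S^1$ is obtained as a quotient of the sphere-neck construction, its invariant dominates, and more robustly one can take $M_2$ to be a copy of $S^n$ (or note the monotonicity respects attaching a handle). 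The cleanest route is to observe that the neck-surgery proof of Theorem \ref{generalizedkobayshiinequality} applies verbatim when the two puncture points $p_1, p_2$ lie in the \emph{same} component $M$; gluing the resulting two cylindrical ends to each other produces exactly $M \# (S^{n-1}\times S^1)$ rather than $M_1 \# M_2$, and the same test-function estimate yields the bound.

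The hard part will be making precise the identification of the glued manifold with $M \# (S^{n-1}\times S^1)$ and verifying that the cut-off function $F_l$ construction from the proof of Theorem \ref{generalizedkobayshiinequality} transfers without change to the single-component setting. Concretely, when both punctures are on $M$, removing a small ball around each and attaching the tube $[0,l]\times S^{n-1}$ connecting the two resulting boundary spheres adds a handle, which is topologically the connected sum with $S^{n-1}\times S^1$. I would check that the energy estimate $E_{M\setminus\{p_1,p_2\}}(F_l) \leq Y_{\lambda}(\bar M, \partial \bar M, [g_l]) + B/l$ and the constraint inequality both survive verbatim, since those estimates were local near the neck and never used that the two ends belonged to different components. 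Passing $l \to \infty$ and letting $\epsilon \to 0$ then yields \eqref{selfconnectsum}. I expect the only genuine subtlety to be confirming that the conformal-flatness normalization near the two chosen interior points of $M$ can still be arranged simultaneously, which follows from the same reference \cite{Kobayashi1987/88} cited in the proof of Theorem \ref{generalizedkobayshiinequality}.
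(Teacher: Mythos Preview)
Your proposal eventually lands on exactly the argument the paper uses: the Remark following the Proposition says to view $M\#(S^{n-1}\times S^1)$ as the self-connected-sum of $M$ and rerun the neck-surgery proof of Theorem~\ref{generalizedkobayshiinequality} with both punctures $p_1,p_2$ taken in the interior of the single manifold $M$. Your discussion of why the energy and constraint estimates transfer verbatim, and why the conformal-flatness normalization near $p_1,p_2$ is still available, is correct and is all that is needed.

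The first half of your proposal, however, should be dropped rather than presented as an alternative. Applying Theorem~\ref{generalizedkobayshiinequality} with $M_1=M$ and $M_2=S^{n-1}\times S^1$ and then invoking $\sigma_\lambda(S^{n-1}\times S^1)=\sigma_\lambda(S^n)$ is circular within the paper's logic: that equality is the Corollary proved immediately \emph{after} Proposition~\ref{propofkobayashi} by taking $M=S^n$ in it. You could in principle salvage this route by citing the external Schoen--Kobayashi result for $\lambda=1$ and noting that for closed manifolds $\sigma_\lambda=\lambda^{-2/p}\sigma_1$, but that defeats the paper's stated purpose of giving a new, non-analytic proof of that very fact. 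So the clean write-up is just your final paragraph: both punctures in $M$, attach a single tube $[0,l]\times S^{n-1}$ between the two ends, identify the result as $M\#(S^{n-1}\times S^1)$, and repeat the $F_l$ cut-off estimate unchanged.
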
 
\begin{remark} $ $
\begin{enumerate}
\item Suppose $M$ is a compact $n$-manifold with boundary, then \eqref{selfconnectsum} holds for $\lambda\in [0,1]$.
\item $M\# S^{n-1}\times S^1$ can be regarded as the ''connect sum'' of $M$ with itself, applying the same proof in Generalized Kobayashi's inequality, \ref{propofkobayashi} could be easily proven.

\end{enumerate}
\end{remark}

With \ref{propofkobayashi}, we give a new proof of the famous result that $S^{n-1}\times S^1$ has the same Yamabe invariant as $S^n$ without using analytical tools.
\begin{cor}
For any $n\geq 3$, $\lambda\in (0,1]$,
\begin{equation}
\sigma_{\lambda}(S^{n-1}\times S^1)= \sigma_{\lambda}(S^n).
\end{equation}
\end{cor}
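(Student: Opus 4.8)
The plan is to prove the two inequalities separately, since an equality of invariants splits naturally into a lower and an upper bound. For the lower bound $\sigma_\lambda(S^{n-1}\times S^1)\geq \sigma_\lambda(S^n)$, I would apply Proposition \ref{propofkobayashi} with $M=S^n$. The round sphere carries a metric of positive scalar curvature, so $\sigma_\lambda(S^n)>0$ and in particular the hypothesis $\sigma_\lambda(M)\geq 0$ is met. The proposition then gives
\[
\sigma_\lambda\big(S^n \#(S^{n-1}\times S^1)\big)\geq \sigma_\lambda(S^n).
\]
The remaining point is purely topological: connected sum with $S^n$ is the identity operation, $S^n \# X\cong X$ for every closed $n$-manifold $X$, so $S^n \#(S^{n-1}\times S^1)\cong S^{n-1}\times S^1$, and the displayed inequality becomes $\sigma_\lambda(S^{n-1}\times S^1)\geq \sigma_\lambda(S^n)$. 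This is the genuinely new, analysis-free half of the statement, and it is where Proposition \ref{propofkobayashi} does all the work.

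For the reverse inequality I would invoke the maximality of the round sphere among closed manifolds. First I would record that for a closed (boundaryless) manifold both the energy $E_M$ and the constraint set ignore the empty boundary, so rescaling the test function by the constant fixing the constraint $\lambda\int_M|u|^p\,dV_g=1$ yields the scaling identity $Y_\lambda(M,[g])=\lambda^{-(n-2)/n}\,Y(M,[g])$, and hence $\sigma_\lambda(M)=\lambda^{-(n-2)/n}\,\sigma(M)$. Aubin's concentration estimate gives $Y(M,[g])\leq Y(S^n,[g_0])=\sigma(S^n)$ for every conformal class, so $\sigma(M)\leq \sigma(S^n)$; multiplying by $\lambda^{-(n-2)/n}$ gives $\sigma_\lambda(M)\leq \sigma_\lambda(S^n)$. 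Taking $M=S^{n-1}\times S^1$ produces $\sigma_\lambda(S^{n-1}\times S^1)\leq \sigma_\lambda(S^n)$.

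Combining the two bounds yields the claimed equality for all $n\geq 3$ and $\lambda\in(0,1]$. I would note that the restriction $\lambda\in(0,1]$ is forced both because Proposition \ref{propofkobayashi} requires it and because for a closed manifold the value $\lambda=0$ degenerates the constraint set (there is no boundary term to normalize).

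I expect the only real subtlety to be the upper bound. The lower bound is essentially immediate once one observes the topological triviality of summing with $S^n$, and it is entirely internal to the connected-sum machinery developed in Section~4. The upper bound, by contrast, is the one ingredient that cannot come from the Kobayashi-type inequalities, since those produce only lower bounds on invariants of connected sums; it rests instead on Aubin's maximality of the round sphere, the classical analytic input common to all proofs of this identity, including those of Schoen and Kobayashi. I would therefore be careful to state the scaling identity $\sigma_\lambda(M)=\lambda^{-(n-2)/n}\,\sigma(M)$ for closed $M$ explicitly, so that Aubin's bound for the ordinary Yamabe invariant transfers cleanly to the $\lambda$-invariant.
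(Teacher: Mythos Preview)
Your proposal is correct and follows essentially the same approach as the paper: apply Proposition \ref{propofkobayashi} with $M=S^n$ and use $S^n\#(S^{n-1}\times S^1)\cong S^{n-1}\times S^1$ for the lower bound, then invoke the maximality of the round sphere for the upper bound. The only difference is that you spell out the scaling identity $\sigma_\lambda(M)=\lambda^{-(n-2)/n}\sigma(M)$ to reduce the upper bound to Aubin's classical estimate, whereas the paper simply cites $\sigma_\lambda(M)\leq\sigma_\lambda(S^n)$ as well known.
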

\begin{proof}
Take $M^n=S^n$, then $M\# S^{n-1}\times S^1=S^{n-1}\times S^1$, applying \ref{propofkobayashi} we could obtain
\[
\sigma_{\lambda}(S^{n-1}\times S^1)\geq \sigma_{\lambda}(S^n).
\]
Since it is well-known $\sigma_{\lambda}(M)\leq \sigma_{\lambda}(S^n)$ for any compact closed manifold $M$, we conclude that
\[
\sigma_{\lambda}(S^{n-1}\times S^1)= \sigma_{\lambda}(S^n).
\]
\end{proof}

Schwartz \cite{schwartz2009monotonicity} gave a generalization of the classic Kobayashi's inequality to monotonicity of Yamabe invariant over connect sums along boundaries, as a direct corollary, he showed 
\begin{displaymath}
\sigma_{\lambda}(H^n)=\sigma_{\lambda}(S^n_+),\quad \lambda\in \{0,1\}.
\end{displaymath} 
Where $H^n$ is any $n$-dimensional handle body.

Here, we extend his results to $\lambda\in (0,1)$, we use $\tilde{\#}$ to denote connect sums along boundaries, and as a direct corollary, we show that 
\begin{equation}
\sigma_{\lambda}(H^n)=\sigma_{\lambda}(S^n_+)\quad \forall \lambda\in [0,1].
\end{equation}
\begin{thm}\label{connectsumalongboundary}
Suppose $M_1$ and $M_2$ are compact manifolds with boundaries of dimension $n\geq 3$, then $\forall \lambda\in [0,1]$, we have
\begin{equation}
\sigma_{\lambda}(M_1\tilde{\#} M_2)\geq \sigma_{\lambda}(M_1\sqcup M_2).
\end{equation}
\end{thm}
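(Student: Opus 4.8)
The plan is to mirror the neck-stretching argument of Theorem \ref{generalizedkobayshiinequality}, replacing the interior sphere cross-section $S^{n-1}$ by the hemisphere cross-section $S^{n-1}_+$ and the interior punctures by boundary punctures. Fix $\epsilon>0$ and choose a conformal class $[g]$ on $M=M_1\sqcup M_2$ with $Y_\lambda(M,\partial M,[g])\geq \sigma_\lambda(M)-\epsilon$. Rather than interior points, I would select boundary points $p_1\in\partial M_1$ and $p_2\in\partial M_2$ and arrange, after a conformal change supported near the $p_i$, that $[g]$ is flat in a half-ball neighborhood of each $p_i$ with $\partial M$ totally geodesic there. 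Removing $p_1,p_2$ and applying a conformal factor that blows up at the punctures then produces a complete metric $\tilde g$ on $M\setminus\{p_1,p_2\}$ with two half-cylindrical ends, each isometric to $[0,\infty)\times S^{n-1}_+$, whose lateral boundary is $[0,\infty)\times S^{n-2}$. This is the boundary analogue of the conformal equivalence $\mathbb{R}^n\setminus\{0\}\cong \mathbb{R}\times S^{n-1}$, applied in the half-space model.

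The crucial geometric point is that in the product metric on a neck $[0,l]\times S^{n-1}_+$ the lateral boundary $[0,l]\times S^{n-2}$ is totally geodesic, hence minimal: the equator $S^{n-2}=\partial S^{n-1}_+$ is a totally geodesic great sphere in the round $S^{n-1}$, and taking a Riemannian product with an interval preserves this. Gluing the neck to the compact core along the two boundary hemispheres yields $(\bar M, g_l)$, which is diffeomorphic to $M_1\tilde{\#} M_2$, and along the whole neck the mean curvature vanishes. Consequently the mean-curvature term $\int_{\partial\bar M}H_{g_l}f_l^2\,d\sigma_{g_l}$ receives no contribution from the neck, and the energy over the neck reduces to the interior integral $\int_{[0,l]\times S^{n-1}_+}(|\nabla f_l|^2+c_n f_l^2)\,dV_{g_l}$, exactly as in the closed case.

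Now I would run the mean-value step verbatim. Taking near-minimizers $f_l\in C_{\lambda,1-\lambda}(\bar M)$ with $E_{\bar M}(f_l)\leq Y_\lambda(\bar M,\partial\bar M,[g_l])+\tfrac{1}{l+1}$, the bounded total neck energy (estimated as in Lemma \ref{generalizedkobayshilemma} and its accompanying slice lemma, using $\lambda\int_{\tilde M}|f_l|^p\leq 1$ and Hölder) forces a slice $\{t_l\}\times S^{n-1}_+$ on which $\int_{\{t_l\}\times S^{n-1}_+}(|\nabla f_l|^2+f_l^2)\,dV<A/l$. Cutting $\bar M$ at this slice, reattaching the infinite half-cylinders so that $(M\setminus\{p_1,p_2\},\tilde g)$ reappears, and inserting the same Lipschitz cutoff $F_l$ (equal to $f_l$ on the retained core, linearly damped to $0$ over a unit half-cylinder, and $0$ beyond) produces a test function on $M\setminus\{p_1,p_2\}$. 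Because the lateral boundary is minimal, the damping generates no boundary energy, so the extra energy is purely interior and controlled by the small slice norm, giving $E_{M\setminus\{p_1,p_2\}}(F_l)\leq Y_\lambda(\bar M,\partial\bar M,[g_l])+B/l$. Meanwhile $F_l$ retains all of $f_l$ on the core, so its interior $L^p$ integral and its boundary $L^q$ integral each dominate the corresponding integrals of $f_l$ over $\bar M$; hence $\lambda\int|F_l|^p+(1-\lambda)\int_\partial|F_l|^q\geq 1$, and $F_l$ is admissible after rescaling. This yields $Y_\lambda(M,\partial M,[g])\leq Y_\lambda(\bar M,\partial\bar M,[g_l])+B/l$; letting $l\to\infty$ and then $\epsilon\to0$ gives $\sigma_\lambda(M_1\sqcup M_2)\leq\sigma_\lambda(M_1\tilde{\#} M_2)$.

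I expect the main obstacle to be the boundary bookkeeping rather than a genuinely new analytic difficulty. The delicate step is to verify that the conformal blow-up at the boundary punctures can be chosen so that $\partial M$ stays minimal along the entire half-cylindrical neck and so that the cross-section is exactly the round hemisphere; this requires the conformal factor to have vanishing normal derivative at the boundary, as in Schwartz's setup. Since the endpoint cases $\lambda\in\{0,1\}$ are Schwartz's result and all interior estimates are already in place from Theorem \ref{generalizedkobayshiinequality}, the remaining work is to confirm that the combined interior-plus-boundary constraint of $C_{\lambda,1-\lambda}$ behaves monotonically under this cut-and-cap operation uniformly for every $\lambda\in[0,1]$, the essential new ingredient being the minimality of the lateral boundary that causes all neck boundary terms to drop out.
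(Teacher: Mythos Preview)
Your proposal is correct and follows essentially the same neck-stretching argument as the paper, with hemispheres $S^{n-1}_+$ replacing spheres and boundary punctures replacing interior ones. The only detail the paper adds that you underemphasize is that in the slice lemma one must also control the boundary mean-curvature integral over the original boundary of the compact core $\tilde M$ (which is not part of the neck and need not be minimal); the paper does this via the companion bound $(1-\lambda)\int_{\partial\tilde M}|f_l|^q\le 1$ and H\"older, whereas your citation of only the interior bound $\lambda\int_{\tilde M}|f_l|^p\le 1$ would not suffice when $\lambda$ is small or zero.
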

\begin{proof}
Let $M=M_1\sqcup M_2$, $\epsilon$ be an arbitrary positive number, then there exists a conformal class $[g]$ of $M$, such that 
\begin{equation}
Y_{\lambda}(M,\partial M, [g])\geq \sigma_{\lambda}(M)-\epsilon.
\end{equation}
Take $p_1\in \partial M_1$, $p_2\in \partial M_2$, there exists a function $\varphi\in C^{\infty}(M\setminus\{p_1,p_2\})$, such that $\tilde{g}=e^{\varphi}g$ is isometric to the standard half infinite hemi-cylinder $[0,\infty)\times S_+^{n-1}$ near the removable points. We write
\begin{displaymath}
(M\setminus\{p_1,p_2\},\tilde{g})=[0,\infty)\times S_+^{n-1}\cup (\tilde{M},\tilde{g})\cup [0,\infty)\times S_+^{n-1},
\end{displaymath}
where $\tilde{M}$ is the complement of the two hemi-cylinders.

We can glue $(\tilde{M},\tilde{g})$ and $[0,l]\times S_+^{n-1}$, the product of the interval of length $l$ with the unit $(n-1)$ sphere, along the boundaries to get a smooth Riemannina manifold $(\bar{M},g_l)$,
\begin{displaymath}
(\bar{M},g_l)=(\tilde{M},\tilde{g})\cup [0,l]\times S_+^{n-1}.
\end{displaymath}
Take $f_l\in C_{\lambda,1-\lambda}(\bar{M})$, such that $E_{\bar{M}}(f_l)\leq Y_{\lambda}(\bar{M},\partial \bar{M}, [g_l])+\frac{1}{l+1}$.

We have the following lemma
\begin{lemma}
There exists $t_l\in [0,l]$, and a constant $A$ not depending on $l$, such that
\begin{equation}
\int_{\{t_l\}\times S_+^{n-1}}(|\nabla f_l|^2+f_l^2)dV_{S^{n-1}}<\frac{A}{l}.
\end{equation}
\end{lemma}
\begin{proof}
By definition, 
\begin{equation}
\begin{split}
E_{\bar{M}}(f_l)=&\int_{\bar{M}}|\nabla f_l|^2dV_{g_l}+a(n)\int_{M_l}R_{g_l}f_l^2dV_{g_l}+b(n)\int_{\partial \bar{M}}H_{g_l}f_l^2d\sigma_{g_l}\\
=&\int_{\tilde{M}}|\nabla f_l|^2dV_{g_l}+a(n)\int_{\tilde{M}}R_{\tilde{g}}f_l^2dV_{g_l}+b(n)\int_{\partial \tilde{M}\setminus \{\{0\}\times S^{n-1}_+, \{l\}\times S^{n-1}_+\}}H_{\tilde{g}}f_l^2d\sigma_{\tilde{g}}\\
&+\int_{[0,l]\times S^{n-1}}|\nabla f_l|^2dV_{g_l}+a(n)n(n-1)\int_{[0,l]\times S_+^{n-1}}f_l^2dV_{g_l}\\
\geq& a(n)\min\{0,\min_{x\in\tilde{M}}R_{\tilde{g}}(x)\}\int_{\tilde{M}}f_l^2dV_{\tilde{g}}\\
&+b(n)\min\{0,\min_{x\in\partial \tilde{M}\setminus\{\{0\}\times S^{n-1}_+, \{l\}\times S^{n-1}_+\}}H_{\tilde{g}}(x)\}\int_{\partial\tilde{M}}f_l^2d\sigma_{\tilde{g}}+\tilde{a}(n)\int_{[0,l]\times S_+^{n-1}}f_l^2dV_{g_l}\\
&+\int_{[0,l]\times S_+^{n-1}}|\nabla f_l|^2dV_{g_l}
\end{split}
\end{equation}
here we used the fact the boundary of $\bar{M}$ is minimal.

Notice that
\begin{displaymath}
\lambda\int_{\tilde{M}}|f_l|^pdV_{\tilde{g}}\leq 1,
\end{displaymath}
by Hölder's inequality, we know
\begin{displaymath}
\int_{\tilde{M}}f_l^2dV_{\tilde{g}}\leq \frac{1}{\lambda^{1-\frac{2}{n}}}\text{Vol}(\tilde{M})^{2/n}.
\end{displaymath}
Similarly, we have
\begin{displaymath}
\int_{\partial\tilde{M}\setminus \{\{0\}\times S^{n-1}_+,\{l\}\times S^{n-1}_+\}}f_l^2\leq (1-\lambda)^{\frac{n-2}{n-1}}\text{Area}(\partial\tilde{M}\setminus\{\{0\}\times S^{n-1}_+, \{l\}\times S^{n-1}_+\})^{\frac{1}{n-1}}.
\end{displaymath}
This implies
\begin{displaymath}
\int_{[0,l]\times S^{n-1}}|\nabla f_l|^2dV_{g_l}+\int_{[0,l]\times S^{n-1}}f_l^2dV_{g_l}\leq Y_{\lambda}(\bar{M},[g_l])+\frac{1}{l+1}+A_1,
\end{displaymath}
by the Mean Value Theorem, we complete the proof.
\end{proof}
Cut off $\bar{M}$ on the section $\{t_l\}\times S_+^{n-1}$, and attach 2 infinite cylinders to it, so that $(M\setminus \{p_1,p_2\},\tilde{g})$ reappears:
\begin{displaymath}
(M\setminus\{p_1,p_2\},\tilde{g})=[0,\infty)\times S_+^{n-1}\cup (\bar{M}\setminus \{t_l\}\times S_+^{n-1},g_l)\cup [0,\infty)\times S_+^{n-1}.
\end{displaymath}
Define a Lipshitz function on $\bar{M}\setminus\{p_1,p_2\}$ as
\begin{displaymath}
F_l(x)=\left\{\begin{array}{ccc}
f_l(x) & x\in \bar{M}\setminus \{t_l\}\times S_+^{n-1}\\
(1-t)\tilde{f_l}(y) & x=(t,y)\in [0,1]\times S_+^{n-1}\\
0 & x=(t,y)\in [1,\infty)\times S_+^{n-1}
\end{array}\right.
\end{displaymath}
where $\tilde{f_l}=f_l|_{\{t_l\}\times S_+^{n-1}}$.
Then we could compute that
\begin{displaymath}
E_{M\setminus\{p_1,p_2\}}(F_l)\leq Y_{\lambda}(\bar{M},\partial \bar{M}, [g_l])+\frac{B}{l},
\end{displaymath}
where $B$ is constant independent of $l$.

Also, we have
\begin{displaymath}
\lambda\int_{M\setminus\{p_1,p_2\}}|F_l|^pdV_{\tilde{g}}+(1-\lambda)\int_{\partial M}|F_l|^qd\sigma_{\tilde{g}}\geq \lambda \int_{\bar{M}}|f_l|^pdV_{g_l}+(1-\lambda)\int_{\partial \bar{M}}|f_l|^q d\sigma_{g_l}=1.
\end{displaymath}
which indicates that
\begin{displaymath}
Y_\lambda(M,\partial M, [g])\leq Y_{\lambda}(\bar{M},\partial\bar{M}, [g_l])+\frac{B}{l}.
\end{displaymath}
After passing $l$ to $\infty$, we can obtain that
\begin{equation}
\sigma_{\lambda}(M)-\epsilon\leq Y_{\lambda}(M,\partial M, [g])\leq \sigma_{\lambda}(M_1\tilde{\#} M_2).
\end{equation}
Since $\epsilon$ could be arbitrarily small, we could obtain that
\begin{displaymath}
\sigma_{\lambda}(M_1\sqcup M_2)\leq \sigma_{\lambda}(M_1\tilde{\#} M_2).
\end{displaymath}
As a direct corollary, supposing $\sigma_{\lambda}(M_1), \sigma_{\lambda}(M_2)\geq 0$, we have
\begin{equation}
\sigma_{\lambda}(M_1\tilde{\#} M_2)\geq \min\{\sigma_{\lambda}(M_1),\sigma_\lambda(M_2)\}.
\end{equation}
\end{proof}

Escobar \cite{Escobar96} had proved the following Theorem.
\begin{thm}[Escobar]\label{Escobarupperbound}
Suppose $M$ is any $n$-dimensional compact manifold with boundary, and $\forall \lambda\in [0,1]$, then we have
\begin{equation}
\sigma_{\lambda}(M)\leq \sigma_{\lambda}(S^n_+).
\end{equation}
\end{thm}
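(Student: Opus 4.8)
This is the boundary analogue of Aubin's inequality $\sigma(M)\le\sigma(S^n)$, and the plan is to prove it one conformal class at a time. Since $\sigma_\lambda(M)=\sup_{[g]}Y_\lambda(M,\partial M,[g])$, it suffices to show $Y_\lambda(M,\partial M,[g])\le\sigma_\lambda(S^n_+)$ for an arbitrary conformal class $[g]$ on $M$. Recall that $\sigma_\lambda(S^n_+)=Y_\lambda(S^n_+,[g_0])$. The first step is to identify this number with a sharp Euclidean constant: stereographic projection from a boundary point gives a conformal diffeomorphism between $(S^n_+\setminus\{\mathrm{pt}\},g_0)$ and the flat half-space $(\mathbb{R}^n_+,\delta)$, and because the quadratic form $E_M$ and the two constraint integrals are built from the conformally covariant Laplacian and boundary operator (with the conformal weights of $dV_g$ and $d\sigma_g$ matched exactly to the exponents $p$ and $q$), this identifies
\[
\sigma_\lambda(S^n_+)=\mathcal{Y}_\lambda:=\inf\Big\{\int_{\mathbb{R}^n_+}\tfrac{4(n-1)}{n-2}|\nabla U|^2\,dx:\ \lambda\!\int_{\mathbb{R}^n_+}\!|U|^p\,dx+(1-\lambda)\!\int_{\partial\mathbb{R}^n_+}\!|U|^q\,d\sigma=1\Big\},
\]
the best constant in the Euclidean Sobolev--trace inequality on the half-space (here $R_\delta=0$ and the flat boundary is minimal, so only the Dirichlet term survives).

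Next I would build a concentrating family of test functions at a boundary point. Fix $q_0\in\partial M$ and choose boundary normal coordinates near $q_0$ in which $g$ agrees with $\delta$ to first order. Given a near-minimizer $U$ of $\mathcal{Y}_\lambda$, set $U_\epsilon(x)=\epsilon^{-(n-2)/2}U(x/\epsilon)$, multiply by a fixed cutoff supported in the chart, and transplant the result to a function $u_\epsilon$ on $M$. The decisive arithmetic fact is that $p=2n/(n-2)$ and $q=2(n-1)/(n-2)$ are precisely the exponents for which the three critical integrals $\int_M|\nabla u_\epsilon|^2$, $\int_M|u_\epsilon|^p$ and $\int_{\partial M}|u_\epsilon|^q$ are invariant under the dilation $x\mapsto x/\epsilon$; hence the $\lambda$-constraint value of $u_\epsilon$ stays bounded away from $0$ and $\infty$, and after multiplying by a normalizing constant $c_\epsilon\to c_0$ we may assume $c_\epsilon u_\epsilon\in C_{\lambda,1-\lambda}(M,g)$. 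This concentration is the correct one for every $\lambda\in[0,1]$, including the endpoints, since the boundary-concentrated profile has nonvanishing bulk and trace masses.

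The final step is the limit computation. For $n\ge5$ the $L^2$ mass satisfies $\int_M u_\epsilon^2\,dV_g=O(\epsilon^2)$, so the curvature term $\int_M R_g u_\epsilon^2$ and the mean-curvature term $\int_{\partial M}H_g u_\epsilon^2$ are $O(\epsilon^2)$ and drop out, while the gradient term converges to $\int_{\mathbb{R}^n_+}\tfrac{4(n-1)}{n-2}|\nabla U|^2$ up to errors from the cutoff and from the deviation of $g$ from $\delta$, all of which are $o(1)$. Hence $Y_\lambda(M,\partial M,[g])\le E_M(c_\epsilon u_\epsilon)=\mathcal{Y}_\lambda+o(1)$, and letting $\epsilon\to0$ and then letting $U$ approach the infimum gives $Y_\lambda(M,\partial M,[g])\le\mathcal{Y}_\lambda=\sigma_\lambda(S^n_+)$. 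Taking the supremum over $[g]$ proves the theorem.

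I expect the main obstacle to be the error analysis of the non-scale-invariant terms in low dimensions. When $n=3,4$ the extremal half-space bubble is not in $L^2$ ($n=4$ being the borderline logarithmic case), so the estimate $\int_M u_\epsilon^2=O(\epsilon^2)$ must be replaced by a more delicate Aubin--Escobar cutoff argument, and one must check that the resulting corrections stay lower order than the invariant gradient integral uniformly in $\lambda\in[0,1]$. A secondary point is verifying carefully that the $\lambda$-weighted constraint transforms correctly under the conformal factor in the identification $\sigma_\lambda(S^n_+)=\mathcal{Y}_\lambda$, which is exactly where the matching of the conformal weights of $dV_g$ and $d\sigma_g$ against $p$ and $q$ is used.
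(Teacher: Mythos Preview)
The paper does not supply its own proof of this theorem: it is stated as a result of Escobar and cited to \cite{Escobar96} (and again to \cite{escobar1992conformal} in the proof of the Main Theorem), with no argument given. So there is no ``paper's proof'' to compare against; you are being asked to reproduce a known result.

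Your sketch is the correct standard approach --- it is essentially Escobar's argument, adapted verbatim to the $\lambda$-constraint. The two structural ingredients (conformal identification of $(S^n_+,g_0)$ with the flat half-space to read off the sharp constant, and boundary-concentrating rescaled bubbles as test functions) are exactly what Escobar uses, and your observation that the exponents $p=\tfrac{2n}{n-2}$ and $q=\tfrac{2(n-1)}{n-2}$ make the interior and boundary constraint integrals simultaneously scale-invariant is precisely why the proof goes through uniformly in $\lambda\in[0,1]$.

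Two small corrections. First, the boundary $L^2$ term scales as $\int_{\partial M}u_\epsilon^2\,d\sigma_g=O(\epsilon)$, not $O(\epsilon^2)$; this does not affect the conclusion. Second, your anticipated obstacle in low dimensions is overstated for the \emph{non-strict} inequality you are proving here: even for $n=3,4$ one computes $\int_M u_\epsilon^2=O(\epsilon)$ (or $O(\epsilon^2\log(1/\epsilon))$ when $n=4$) and $\int_{\partial M}u_\epsilon^2=o(1)$, so the curvature and mean-curvature terms still vanish in the limit and the upper bound follows directly. The genuinely delicate low-dimensional analysis (conformal normal coordinates, positive mass, etc.) is needed only for the \emph{strict} inequality that resolves the Yamabe problem, not for $\sigma_\lambda(M)\le\sigma_\lambda(S^n_+)$ itself.
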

With this fact and \ref{connectsumalongboundary}, we deduce that
\begin{cor}
\begin{displaymath}
\sigma_{\lambda}(H^n)=\sigma_{\lambda}(S^{n-1}_+),\quad \forall\lambda\in[0,1].
\end{displaymath}
\end{cor}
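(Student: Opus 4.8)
The plan is to establish the two inequalities $\sigma_\lambda(H^n)\le\sigma_\lambda(S^n_+)$ and $\sigma_\lambda(H^n)\ge\sigma_\lambda(S^n_+)$ separately, for every $\lambda\in[0,1]$ (here $S^n_+$ is the $n$-dimensional hemisphere). The upper bound is immediate: $H^n$ is a compact manifold with nonempty boundary, so Escobar's Theorem \ref{Escobarupperbound} applies directly and gives $\sigma_\lambda(H^n)\le\sigma_\lambda(S^n_+)$. All the remaining work is in the reverse inequality, for which I would use a handle decomposition of $H^n$: up to diffeomorphism, $H^n$ is obtained from a single $n$-ball $D^n$ by attaching $g$ one-handles along its boundary, so that $H^n\cong\natural_g(S^1\times D^{n-1})$, with $H^n\cong D^n\cong S^n_+$ when $g=0$. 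Since the $\lambda$-Yamabe invariant is a diffeomorphism invariant and $\sigma_\lambda(S^n_+)>0$, we have $\sigma_\lambda(D^n)=\sigma_\lambda(S^n_+)\ge0$, so the hypotheses of the corollary to Theorem \ref{connectsumalongboundary} are satisfied at every stage of the construction below.

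The first step is the single handle: I claim $\sigma_\lambda(S^1\times D^{n-1})\ge\sigma_\lambda(S^n_+)$ for all $\lambda\in[0,1]$. Topologically, $S^1\times D^{n-1}$ is precisely $D^n$ with one boundary one-handle attached, i.e.\ the boundary self-connected-sum of $D^n$ with itself, with both feet of the handle on the same boundary sphere. The monotonicity argument in the proof of Theorem \ref{connectsumalongboundary} is entirely local near the attaching neck, and therefore adapts without change to this self-sum situation — exactly as Proposition \ref{propofkobayashi} adapts the interior Generalized Kobayashi inequality \ref{generalizedkobayshiinequality} to the self-sum $M\#(S^{n-1}\times S^1)$ (cf.\ the remark following Proposition \ref{propofkobayashi}). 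This yields $\sigma_\lambda(S^1\times D^{n-1})\ge\sigma_\lambda(D^n)=\sigma_\lambda(S^n_+)$, and together with the Escobar upper bound, in fact $\sigma_\lambda(S^1\times D^{n-1})=\sigma_\lambda(S^n_+)$.

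With the single handle settled, the assembly is a short induction on $g$ using Theorem \ref{connectsumalongboundary}. Writing $H^n\cong(S^1\times D^{n-1})\,\tilde{\#}\,\cdots\,\tilde{\#}\,(S^1\times D^{n-1})$ with $g$ factors, each of invariant $\sigma_\lambda(S^n_+)\ge0$, the corollary of Theorem \ref{connectsumalongboundary} (valid for $\lambda\in[0,1]$ since every piece has nonnegative invariant, a property preserved under each boundary connected sum) gives
\[
\sigma_\lambda(H^n)\ \ge\ \min_i\sigma_\lambda(S^1\times D^{n-1})\ =\ \sigma_\lambda(S^n_+).
\]
Combining this with the upper bound from Theorem \ref{Escobarupperbound} yields the desired equality $\sigma_\lambda(H^n)=\sigma_\lambda(S^n_+)$ for all $\lambda\in[0,1]$.

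The crux is the single-handle step: one must verify that the neck estimate (the lemma producing the good slice $\{t_l\}\times S^{n-1}_+$) and the cutoff function $F_l$ from the proof of Theorem \ref{connectsumalongboundary} still go through when the two removed points lie on the same connected boundary component, and that the relevant constants remain uniform in $l$ and controlled across the whole interval $\lambda\in[0,1]$ — in particular at the endpoints $\lambda\in\{0,1\}$, where one of the two constraint terms degenerates. Granting this local adaptation, the rest of the argument is bookkeeping.
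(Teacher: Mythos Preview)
Your proposal is correct and follows exactly the route the paper indicates: the upper bound comes from Escobar's Theorem~\ref{Escobarupperbound}, and the lower bound from the boundary connected-sum monotonicity of Theorem~\ref{connectsumalongboundary}. You have also correctly identified and made explicit a step the paper leaves tacit: since a genus-$g$ handlebody is obtained from $D^n\cong S^n_+$ by successively attaching $1$-handles along the boundary, one needs the boundary \emph{self}-sum analogue of Theorem~\ref{connectsumalongboundary}, exactly parallel to how Proposition~\ref{propofkobayashi} relates to Theorem~\ref{generalizedkobayshiinequality} in the interior case; the proof carries over verbatim because the neck construction and slice estimate are local.
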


\section{Continuity of $Y_{\lambda,1-\lambda}(M,\partial M,[g])$ in $\lambda$}
Given $(M,g)$, any compact manifold with boundary, Escobar \cite{Escobar96} proved the continuity $Y_{a,b}(M,\partial M,[g])$ in $b$, provided that $a$ is fixed and positive. In the spirit of Escobar's proof, Sun \cite{sun2017yamabe} proved the continuity of $Y_{a,b}(M,\partial M,[g])$ in $K$, where $K=\{a,b\in\mathbb{R}: a\geq 0, b\geq 0\}\setminus\{(0,0)\}$, under the assumption $Y_{1,0}(M,\partial M,[g])\geq 0$. 

Since we adopt a different but equivalent constrain condition from Sun's proof \cite{sun2017yamabe}, we rewrite the continuity of $Y_{\lambda,1-\lambda}(M,\partial M,[g])$ as follows.
\begin{prop} 
Suppose $(M,g)$ is a compact manifold with boundary. Assume that $Y_{1,0}(M,\partial M,[g])\geq 0$ and $(a,b)\in K$, then $Y_{a,b}(M,\partial M,[g])$ is non-increasing in $a$ for any fixed $b$, as well as in $b$ for any fixed $a$, and is continuous in $K$.
\end{prop}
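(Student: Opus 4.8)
The plan is to distill from the hypothesis $Y_{1,0}(M,\partial M,[g])\ge 0$ a single coercivity statement and then run two scaling comparisons. Since $E_M$ is quadratic, $E_M(tu)=t^2E_M(u)$, the inequality $Y_{1,0}\ge 0$ is equivalent to $E_M(u)\ge 0$ for every admissible $u>0$ (rescale any $u$ into $C_{1,0}$); in particular $Y_{a,b}(M,\partial M,[g])\ge 0$ for all $(a,b)\in K$. Next I use that $Y_{a,b}$ is a conformal invariant of $[g]$: under $\tilde g=w^{4/(n-2)}g$ the map $u\mapsto wu$ is a bijection $C_{a,b}(M,\tilde g)\to C_{a,b}(M,g)$ preserving $E_M$, precisely because $p=2n/(n-2)$ and $q=2(n-1)/(n-2)$ are the conformal weights of $dV_g$ and $d\sigma_g$. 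Because $Y_{1,0}\ge0$, the first eigenvalue of the conformally covariant boundary value problem is nonnegative, so I may take its positive first eigenfunction as conformal factor and fix a representative $g$ with $R_g\ge 0$ and $H_g=0$. For this representative $E_M(u)=\frac{4(n-1)}{n-2}\int_M|\nabla u|^2_g\,dV_g+\int_M R_gu^2\,dV_g\ge \frac{4(n-1)}{n-2}\int_M|\nabla u|^2_g\,dV_g$, which is the coercivity I will exploit.

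Monotonicity is then immediate. If $(a_1,b_1)\le(a_2,b_2)$ componentwise and $u\in C_{a_1,b_1}$, the function $c\mapsto a_2c^p\int_M|u|^p+b_2c^q\int_{\partial M}|u|^q$ increases from $0$ to $\infty$ and is $\ge 1$ at $c=1$, so there is a unique $c\in(0,1]$ with $cu\in C_{a_2,b_2}$; then $E_M(cu)=c^2E_M(u)\le E_M(u)$ since $c\le 1$ and $E_M(u)\ge 0$. Taking the infimum gives $Y_{a_2,b_2}\le Y_{a_1,b_1}$, i.e. $Y_{a,b}$ is non-increasing in each of $a$ and $b$.

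For continuity at a point $(a_0,b_0)\in K$ I establish $\limsup Y_{a,b}\le Y_{a_0,b_0}\le\liminf Y_{a,b}$ as $(a,b)\to(a_0,b_0)$ in $K$. The upper bound is soft: fix $u\in C_{a_0,b_0}$ and let $c(a,b)$ be the rescaling with $c(a,b)u\in C_{a,b}$; since $pa_0\!\int_M|u|^p+qb_0\!\int_{\partial M}|u|^q>0$, the implicit function theorem gives $c(a,b)\to 1$, so $Y_{a,b}\le c(a,b)^2E_M(u)\to E_M(u)$, and taking the infimum over $u\in C_{a_0,b_0}$ yields $\limsup Y_{a,b}\le Y_{a_0,b_0}$. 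The lower bound is the substantive direction: choose $(a_k,b_k)\to(a_0,b_0)$ realizing the liminf and near-minimizers $u_k\in C_{a_k,b_k}$ with $E_M(u_k)\le Y_{a_k,b_k}+1/k$, rescale to $c_ku_k\in C_{a_0,b_0}$, and use $Y_{a_0,b_0}\le c_k^2E_M(u_k)\le c_k^2(Y_{a_k,b_k}+1/k)$. Writing $A_k=\int_M|u_k|^p$, $B_k=\int_{\partial M}|u_k|^q$, the factor $c_k$ is computed explicitly from $a_kA_k+b_kB_k=1$, and the claim $c_k\to 1$ reduces to $a_kA_k\to 0$ (when $a_0=0$) or $b_kB_k\to 0$ (when $b_0=0$); the interior case $a_0,b_0>0$ is automatic from $A_k\le 1/a_k$, $B_k\le 1/b_k$.

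The heart of the matter, and the step I expect to be hardest, is the uniform a priori bound forcing $a_kA_k\to0$ (resp. $b_kB_k\to0$). Here the chosen representative pays off: $E_M(u_k)\le\Lambda$ bounds $\|\nabla u_k\|_{L^2}$ with no interference from scalar or mean-curvature terms. Near a pure-boundary point ($a_0=0$, $b_0>0$) the constraint gives $B_k\le 1/b_k$, so $\|u_k\|_{L^2(\partial M)}$ is bounded by H\"older on $\partial M$ (as $q>2$); a Poincar\'e inequality with boundary term, $\|u\|_{L^2(M)}^2\le C(\|\nabla u\|_{L^2(M)}^2+\|u\|_{L^2(\partial M)}^2)$, then bounds $\|u_k\|_{H^1}$, and the Sobolev embedding $H^1(M)\hookrightarrow L^p(M)$ bounds $A_k$, whence $a_kA_k\le Ca_k\to0$. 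The case $b_0=0$, $a_0>0$ is symmetric, controlling $A_k\le 1/a_k$ first and then $B_k$ through the trace embedding $H^1(M)\hookrightarrow L^q(\partial M)$. In all cases $c_k\to1$, giving $Y_{a_0,b_0}\le\liminf Y_{a,b}$ and hence continuity on $K$. The genuinely delicate feature is that the estimate must survive the degenerate case $Y_{1,0}=0$, where no direct bound $E_M(u)\gtrsim\|u\|_{L^p}^2$ is available; it is precisely the conformal reduction to $R_g\ge0$, $H_g=0$, together with the boundary Poincar\'e and trace inequalities, that supplies the missing coercivity and lets the bound close uniformly as $(a,b)\to\partial K$.
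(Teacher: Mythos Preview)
Your proof is correct and follows essentially the same strategy as the paper: conformally normalize the metric using $Y_{1,0}\ge 0$ so that $E_M$ dominates the Dirichlet energy, obtain monotonicity by rescaling test functions, and prove continuity by combining the soft upper bound with an $H^1$ a~priori estimate on near-minimizers to control the rescaling factor. The only cosmetic differences are that the paper normalizes to $R_g=0$, $H_g\ge 0$ rather than your $R_g\ge 0$, $H_g=0$, and it argues separate continuity in $a$ and in $b$ rather than joint continuity on $K$; your treatment of the $H^1$ bound via the boundary Poincar\'e and trace inequalities is in fact more explicit than the paper's.
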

\begin{proof}
Suppose $b>0$ fixed, and $a\geq 0$. 

For any $0\leq a_1\leq a_2$, and any $\epsilon>0$, there exists $u_1\in C_{a_1,b}(M)$ such that
\[
Y_{a_1,b}(M,\partial M,[g])\leq E(u)\leq Y_{a_1,b}(M,\partial M,[g])+\epsilon.
\]
Since $0\leq a_1\leq a_2$, we have
\[
a_2\int_{M}|u_1|^pdV_g+b\int_{\partial M}b|u|^qd\sigma_g\geq 1,
\]
which implies
\[
Y_{a_2,b}(M,\partial M,[g])\leq E(u).
\]
Therefore, we conclude that
\[
Y_{a_2,b}(M,\partial M,[g])\leq Y_{a_1,b}(M,\partial M,[g]),
\]
provided $0\leq a_1\leq a_2$.

Now we show the continuity of $Y_{a,b}(M,\partial M,[g])$ in $a$.

Since $Y_{1,0}(M,\partial M,[g])\geq 0$, we may assume that $R_g=0$ and $H_g\geq 0$. Let $\{a_n\}_{n=1}^{\infty}$ be any non-negative sequence that approaches $a$.

For any $\epsilon>0$, there exists $u\in C_{a,b}(M)$, such that
\[
E_M(u)\leq Y_{a,b}(M,\partial M,[g])+\epsilon.
\]
For this particular $u$, we have $\{\lambda_n\geq 0\}_{n=1}^{\infty}$, such that $\lambda_nu\in C_{a_n,b}(M)$, and $\lim_{n\to\infty}\lambda_n=1$. Then we have
\[
Y_{a_n,b}(M,\partial M,[g])\leq E(\lambda_n u)=\lambda_n^2 E_M(u)\leq \lambda_n^2(Y_{a,b}(M,\partial M,[g])+\epsilon).
\]
Take $n\to\infty$, we can obtain
\[
\lim_{n\to\infty}Y_{a_n,b}(M,\partial M,[g])\leq Y_{a,b}(M,\partial M,[g])+\epsilon.
\]
Since $\epsilon$ can be arbitrarily small, we have
\[
\lim_{n\to\infty}Y_{a_n,b}(M,\partial M,[g])\leq Y_{a,b}(M,\partial M,[g]).
\]

For fixed $a_n$, given any $\epsilon_1>0$, there exists $u_n\in C_{a_n,b}(M)$ such that
\[
E(u_n)\leq Y_{a_n,b}(M,\partial M,[g])+\epsilon.
\]
Let $a_0=\inf_{n}a_n$, by monotonicity of $Y_{a,b}(M,\partial M,[g])$ in $a$, we have
\[
Y_{a_n,b}(M,\partial M,[g])\leq Y_{a_0,b}(M,\partial M,[g]).
\]
Since $u_n\in C_{a_n,b}(M)$, and under the assumption $R_g=0$, $H_g\geq 0$, we have
\[
\begin{split}
\frac{4(n-1)}{n-2}\int_{M}|\nabla u_n|_g^2dV_g&=E_M(u_n)-2(n-1)\int_{\partial M}H_gu_n^2d\sigma_g\\
&\leq Y_{a_n,b}(M,\partial M,[g])+\epsilon_1\\
&\leq Y_{a_0,b}(M,\partial M,[g])+\epsilon_1.
\end{split}
\]
This yields $\{u_n\}$ is uniformly bounded in $H^1(M,g)$.

Consider when $a_n\geq a$, then for sufficiently large $n$, we have
\[
1>a\int_{M}|u_n|^p+b\int_{\partial M}|u_n|^qd\sigma_g\geq 1-\epsilon_2.
\]
Again, we have a sequence $\{\lambda_n\geq 1\}$ such that $\lim_{n\to\infty}\lambda_n=1$, and $\lambda_nu_n\in C_{a,b}(M)$. 

For $n$ where $a_n< a$, just take $\lambda_n=1$.

By direct computation, we have
\[
\lambda_n\leq \frac{1}{(1-\epsilon_2)^{1/q}}.
\]
Consequently, we obtain
\[
Y_{a,b}(M,\partial M,[g])\leq E_M(\lambda_nu_n)=\lambda_n^2E_M(u_n)\leq \frac{Y_{a_n,b}(M,\partial M,[g])+\epsilon_1}{(1-\epsilon_2)^{1/q}}.
\]
Since $\epsilon_1$ can be arbitrarily small, and $\epsilon_2$ approaches $0$ as $n\to\infty$, we know
\[
Y_{a,b}(M,\partial M,[g])\leq \lim_{n\to\infty}Y_{a_n,b}(M,\partial M,[g]).
\]
Combining above, we have shown that $Y_{a,b}(M,\partial M,[g])$ is continuous in $a$. Similarly, one can show the continuity in $b$.
\end{proof}

As a direct corollary, we have
\begin{cor}
Suppose $(M,g)$ is a compact manifold with boundary, then $Y_{\lambda}(M,\partial M,[g])$ is continuous in $\lambda$ for any $\lambda\in [0,1]$.
\end{cor}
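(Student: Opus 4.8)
The plan is to realize $Y_\lambda$ as the restriction of the two-parameter constant $Y_{a,b}$ to a single line segment inside the parameter domain $K$, and then invoke the continuity established in the preceding Proposition. Precisely, by Definition \ref{lambdayamabe} one has $Y_\lambda(M,\partial M,[g]) = Y_{\lambda,1-\lambda}(M,\partial M,[g])$, so $Y_\lambda = Y_{a,b}\circ \phi$, where $\phi\colon [0,1]\to \mathbb{R}^2$ is the affine map $\phi(\lambda)=(\lambda,1-\lambda)$. Thus the problem reduces to a statement about composing a continuous curve with a continuous function of two variables.

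First I would verify that $\phi$ maps $[0,1]$ into $K$, so that the Proposition is actually applicable along the whole segment. For $\lambda\in[0,1]$ both coordinates $\lambda$ and $1-\lambda$ are nonnegative, and the excluded point is avoided because $(\lambda,1-\lambda)=(0,0)$ would force $\lambda+(1-\lambda)=0$, contradicting $\lambda+(1-\lambda)=1$. Hence $\phi(\lambda)\in K$ for every $\lambda\in[0,1]$, and $\phi$ is plainly continuous (indeed affine).

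Next I would apply the Proposition directly: under the standing hypothesis $Y_{1,0}(M,\partial M,[g])\geq 0$, the map $(a,b)\mapsto Y_{a,b}(M,\partial M,[g])$ is continuous on $K$. Continuity of $Y_\lambda$ in $\lambda\in[0,1]$ then follows at once, since the composition of the continuous map $\phi$ with the continuous function $Y_{a,b}$ is continuous.

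There is no substantive obstacle in this argument; it is a one-line consequence of the Proposition once the inclusion $\phi([0,1])\subset K$ is noted. The only point worth flagging is that the Proposition's continuity conclusion is available precisely under the assumption $Y_{1,0}(M,\partial M,[g])\geq 0$, which normalizes $g$ so that $R_g=0$ and $H_g\geq 0$; this hypothesis should therefore be carried as a standing assumption for the corollary, or checked for the manifolds to which it is later applied.
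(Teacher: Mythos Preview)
Your argument is correct and is exactly the one-line deduction the paper intends: it states the corollary immediately after the Proposition with the phrase ``As a direct corollary,'' relying on the same composition $Y_\lambda=Y_{a,b}\circ\phi$ with $\phi(\lambda)=(\lambda,1-\lambda)\in K$. Your observation that the hypothesis $Y_{1,0}(M,\partial M,[g])\geq 0$ from the Proposition should be carried over is well taken; the paper's statement of the corollary omits it, but every later application (e.g.\ in the proof of the Main Theorem) is to conformal classes for which this nonnegativity holds.
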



\section{Proof of the Main theorem}
We first need the continuity of $\sigma_{\lambda}(S_+^n)$ in $\lambda$.
\begin{lemma}
$\sigma_{\lambda}(S^n_+)$ is continuous in $\lambda$ for all $\lambda\in [0,1]$.
\end{lemma}
\begin{proof}
By Escobar's computation \cite{escobar1990uniqueness}, we know
\begin{displaymath}
\sigma_{\lambda}(S_+^n)=\frac{n(n-2)\text{Vol}_{g_0}(S_+^n)}{4(\lambda\text{Vol}_{g_0}(S_+^n)^{(n-2)/n}+(1-\lambda)\text{Area}_{g_0}(\partial S_+^n)^{(n-2)/(n-1)})},
\end{displaymath}
which is obviously continuous in $\lambda$.
\end{proof}
Now, we have all the ingredients we need for the Main Theorem.

\begin{proof}[proof of the Main Theorem]
\textit{We first consider the case when $\lambda\in(0,1]$. }

\vspace{0.5em}

By generalized Kobayashi's inequality, we have
\begin{displaymath}
\begin{split}
\sigma_{\lambda}(\#m_1\mathbb{RP}^n\#m_2&\mathbb{RP}^{n-1}\times S^1\# lH^n\# S_+^n)\geq \\
&\min\{\sigma_{\lambda}(\mathbb{RP}^n), \sigma_{\lambda}(\mathbb{RP}^{n-1}\times S^1),\sigma_{\lambda}(H^n)),\sigma_{\lambda}(S_+^n)\},
\end{split}
\end{displaymath}
and with the facts 
\begin{displaymath}
\sigma_{\lambda}(\mathbb{RP}^n),\sigma_{\lambda}(\mathbb{RP}^{n-1}\times S^1)\geq \sigma_{\lambda}(S_+^n),\quad \sigma_{\lambda}(H^n)=\sigma_{\lambda}(S_+^n),
\end{displaymath}
we could deduce that
\begin{displaymath}
\sigma_{\lambda}(\# m_1\mathbb{RP}^n\#m_2\mathbb{RP}^{n-1}\times S^1\#l H^n\# kS_+^n)\geq \sigma_{\lambda}(S_+^n),\quad \forall \lambda\in (0,1].
\end{displaymath}
Since $k+l\geq 1$, $\#m_1\mathbb{RP}^\#m_2\mathbb{RP}^{n-1}\times S^1\#lH^n\# kS_+^n$ is a manifold with boundary.
Escobar \cite{escobar1992conformal} showed that for any $n$-dimensional manifold with boundary and for any $\lambda\in[0,1]$, the $\lambda$-Yamabe invariant is less or equal to $\sigma_{\lambda}(S_+^n)$. 

Therefore, we conclude that
\[
\sigma_{\lambda}(\#m_1\mathbb{RP}^n\#m_2\mathbb{RP}^{n-1}\times S^1\#lH^n\# kS_+^n)=\sigma_{\lambda}(S_+^n),\quad \forall \lambda\in(0,1].
\]

\vspace{0.5em}

\textit{Now we consider the case when $\lambda=0$. }

\vspace{0.5em}

We denote $\#m_1\mathbb{RP}^\#m_2\mathbb{RP}^{n-1}\times S^1\#lH^n\# kS_+^n$ as $N$. 
Since we have shown that for any $\lambda\in (0,1]$, $\sigma_{\lambda}(N)=\sigma_{\lambda}(S_+^n)$, for any fixed $\lambda\in(0,1]$.Then for arbitrarily small $\epsilon>0$, there exists a conformal class of Riemannian metric $[g]$ on $N$, such that
\begin{displaymath}
Y_{\lambda}(N,\partial N, [g])\geq \sigma_{\lambda}(N)-\epsilon=\sigma_{\lambda}(S_+^n)-\epsilon.
\end{displaymath}
By continuity of $Y_{\lambda}(N,\partial N, [g])$ in $\lambda$, we know
\[
Y_0(N,\partial N, [g])=\lim_{\lambda\to 0}Y_{\lambda}(N,\partial N,[g])=\sigma_0(S_+^n)-\epsilon.
\]

With the fact that $\epsilon$ can be arbitrarily small, we conclude that
\begin{displaymath}
\sigma_{0}(N)\geq \sigma_0(S_+^n).
\end{displaymath}
Notice that, the above equation holds for any $\lambda\in (0,1)$, and we have shown $\sigma_{\lambda}(S_+^n)$ is continuous in $\lambda$ for any $\lambda\in [0,1]$, we can take $\lambda\to 0$, and then conclude
\begin{equation}
\sigma_0(N)\geq \sigma_0(S_+^n).
\end{equation}
By \ref{Escobarupperbound}, we conclude
\begin{equation}
\sigma_0(N)=\sigma_0(S_+^n).
\end{equation}
Combining the above two cases, we have completed the proof of the Main Theorem.
\end{proof}

\section{Other Examples and Applications}
The main idea of our proof lies in the observation that, if the Yamabe invariant of a closed $n$ manifold is greater than or equal to that of $\sigma_1(S_+^n)$, then connect sum of this manifold with $S_+^n$ or $H^n$ has the same $\lambda$-Yamabe invariant as $S_+^n$ for any $\lambda\in [0,1]$.

In dimension 4, we have the following examples.
\begin{ex}[Lebrun \cite{lebrun1996yamabe}]\label{lebrunexample}
\[
\sigma_1(\mathbb{CP}^2)=12\sqrt{2}\pi\geq \sigma_1(S_+^4).
\]
\end{ex}

With the generalized Kobayashi's inequality, and run the above arguments again, in dimension 4, we have
\begin{equation}
\sigma_{\lambda}(\#m_1\mathbb{RP}^4\#m_2\mathbb{RP}^{4}\times S^1\# m_3 \mathbb{CP}^2\#lH^4\#k S_+^4)=\sigma_{\lambda}(S_+^4),\quad \forall \lambda\in [0,1],
\end{equation}
provided $k+l\geq 1$.

\begin{ex}[Peaten and Ruiz \cite{petean2011isoperimetric}]
\begin{displaymath}
\sigma_1(S^2\times S^2)\geq \sigma_1(S_+^4).
\end{displaymath}
\end{ex}
With the generalized Kobayashi's inequality, and run the above arguments again, in dimension 4, we have
\begin{equation}
\sigma_{\lambda}(\#m_1\mathbb{RP}^4\#m_2\mathbb{RP}^{n-1}\times S^1\# m_3\mathbb{CP}^2\#m_4 S^2\times S^2\#lH^4\#k S_+^4)=\sigma_{\lambda}(S_+^4),\quad \forall \lambda\in [0,1],
\end{equation}
provided $k+l\geq 1$.

Another significant theorem proved by Petean \cite{petean2000yamabe} is
\begin{thm}[Petean]
Every simply connected smooth closed manifold of dimension greater than four has non-negative Yamabe invariant.
\end{thm}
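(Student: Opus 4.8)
The plan is to deduce non-negativity from a surgery-stability principle combined with bordism theory, thereby reducing the problem to exhibiting, in each relevant bordism class, a single representative whose Yamabe invariant is manifestly non-negative. The engine is the quantitative surgery formula of Petean--Yun (the Yamabe analogue of the Gromov--Lawson--Schoen--Yau surgery theorem for positive scalar curvature): if $N$ is obtained from $M$ by a surgery of codimension at least $3$, then $\sigma(N)\ge\min\{\sigma(M),\Lambda_n\}$ for a positive constant $\Lambda_n$ depending only on $n$. In particular such surgeries preserve the condition $\sigma\ge 0$, and the strictly positive lower bound $\Lambda_n$ is what prevents the invariant from drifting below zero along a long chain of surgeries.

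Next I would argue that, among simply connected closed manifolds of dimension $n\ge 5$, the sign condition $\sigma\ge 0$ depends only on the bordism class. The idea is first to apply subcritical surgeries (on spheres of dimension below the middle, which automatically have codimension $\ge 3$) to replace a given $M$ by a bordant, highly connected representative without changing the sign of $\sigma$; one then compares two such highly connected representatives of the same class. For highly connected ends a bordism admits a handle decomposition whose handles all lie in the middle range of indices, so that the corresponding surgeries have codimension $\ge 3$ in both directions, and the Petean--Yun formula transports $\sigma\ge 0$ along the chain. The upshot is that it suffices to find, in each bordism class (oriented in the non-spin case, spin in the spin case), one representative with $\sigma\ge 0$.

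I would then split according to whether $M$ is spin. If $M$ is non-spin, Gromov--Lawson produce a metric of positive scalar curvature, so $\sigma(M)>0$. If $M$ is spin, Stolz's solution of the Gromov--Lawson--Rosenberg conjecture in the simply connected case says that $M$ admits positive scalar curvature if and only if its $KO$-valued $\alpha$-invariant vanishes; when $\alpha(M)=0$ we again get $\sigma(M)>0$. The genuinely new content is the remaining case of spin $M$ with $\alpha(M)\ne 0$, where no positive scalar curvature metric exists and one must instead show $\sigma(M)=0$ rather than $\sigma(M)<0$. Here I would invoke the structure of the spin bordism ring: I would produce generators of the part of $\Omega^{Spin}_*$ not covered by positive scalar curvature using Ricci-flat manifolds --- the $K3$ surface and higher-dimensional Calabi--Yau manifolds, whose Ricci-flat Kähler metrics are scalar-flat Yamabe minimizers and hence have $\sigma=0$ --- and assemble a representative of a general $\alpha\ne 0$ class from these together with positive-scalar-curvature generators via Riemannian products and codimension-$\ge 3$ surgeries.

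The hard part will be precisely this last case. Its difficulty is that it lies exactly where positive scalar curvature fails, so every soft argument is unavailable; controlling it requires simultaneously the precise generating set of the spin bordism ring (Stolz's computation), the fact that the Ricci-flat generators realize $\sigma=0$ rather than a negative value, and a verification that the products and surgeries used to build an arbitrary class keep the invariant non-negative. The positive constant $\Lambda_n$ in the surgery formula is essential throughout, since it is what guarantees that a representative built from finitely many non-negative building blocks cannot acquire a strictly negative Yamabe invariant after the chain of surgeries needed to reach $M$ itself.
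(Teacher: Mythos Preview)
The paper does not prove this theorem at all; it is stated as Petean's result with a citation to \cite{petean2000yamabe} and then used as a black box in the subsequent corollary. There is therefore no ``paper's own proof'' to compare your proposal against.

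As a reconstruction of Petean's actual argument your outline is broadly correct, but one point deserves correction. The Petean--Yun surgery formula proves only
\[
\sigma(N)\ \ge\ \min\{\sigma(M),\,0\}
\]
for surgeries of codimension at least $3$; the version with a strictly positive constant $\Lambda_n$ in place of $0$ is a later refinement due to Ammann--Dahl--Humbert. For the theorem at hand the $0$-version already suffices, since one is only propagating the condition $\sigma\ge 0$ through a chain of surgeries, so your assertion that the positive constant is ``essential throughout'' overstates what is needed. A second, smaller remark: Petean's treatment of the spin case uses $K3$ (Ricci-flat, hence $\sigma=0$) and the quaternionic projective spaces $\mathbb{HP}^m$ (positive scalar curvature) as the building blocks for the free part of $\Omega^{Spin}_*$, together with Stolz's result that torsion classes admit positive scalar curvature representatives; invoking higher-dimensional Calabi--Yau manifolds as additional generators is unnecessary. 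With these adjustments your architecture---non-spin via Gromov--Lawson, spin with $\alpha=0$ via Stolz, remaining spin classes via bordism generators transported by codimension-$\ge 3$ surgery---matches Petean's strategy.
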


With the Generalized Kobayshi's inequality, we can deduce
\begin{cor}
Suppose $M^n$ is any simply connected closed manifold with $n\geq 5$, then
\begin{equation}
\sigma_{\lambda}(\# mM^n\#m_1\mathbb{RP}^{n}\#m_2\mathbb{RP}^{n-1}\times S^1\# lH^n\#k S^n_+)\geq 0,\quad \forall \lambda\in [0,1].
\end{equation}
\end{cor}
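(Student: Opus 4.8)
The plan is to combine Petean's theorem with the generalized Kobayashi inequality (Theorem \ref{generalizedkobayshiinequality}) and the lower bounds from Sections 3 and 4, treating $\lambda\in(0,1]$ first and then passing to $\lambda=0$ by the continuity argument used in the Main Theorem.

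First I would verify that every summand of the connected sum has non-negative $\lambda$-Yamabe invariant for $\lambda\in(0,1]$. For the closed simply connected factor $M^n$ with $n\geq 5$, Petean's theorem gives $\sigma_1(M^n)=\sigma(M^n)\geq 0$; since for a closed manifold the boundary term vanishes and $Y_\lambda(M,[g])=\lambda^{-(n-2)/n}Y_1(M,[g])$ by rescaling the constraint, taking the supremum over conformal classes gives $\sigma_\lambda(M^n)=\lambda^{-(n-2)/n}\sigma_1(M^n)$, which has the same sign as $\sigma_1(M^n)$; hence $\sigma_\lambda(M^n)\geq 0$ for every $\lambda\in(0,1]$. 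For the remaining factors I invoke the facts already established: $\sigma_\lambda(\mathbb{RP}^n)\geq\sigma_\lambda(S^n_+)$ and $\sigma_\lambda(\mathbb{RP}^{n-1}\times S^1)\geq\sigma_\lambda(S^n_+)$ from Section 3, $\sigma_\lambda(H^n)=\sigma_\lambda(S^n_+)$ from Section 4, and $\sigma_\lambda(S^n_+)>0$ from the explicit formula in Section 6. Thus every factor is non-negative.

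Next I would apply the min-form of the generalized Kobayashi inequality iteratively. Writing $N$ as a connected sum of factors $X_1,\dots,X_r$ (the copies of $M^n$, $\mathbb{RP}^n$, $\mathbb{RP}^{n-1}\times S^1$, $H^n$, and $S^n_+$, with multiplicities), induction on $r$ using the corollary $\sigma_\lambda(A\#B)\geq\min\{\sigma_\lambda(A),\sigma_\lambda(B)\}$ (valid once $\sigma_\lambda(A),\sigma_\lambda(B)\geq 0$) keeps every partial connected sum non-negative and yields
\[
\sigma_\lambda(N)\geq\min_{1\leq i\leq r}\sigma_\lambda(X_i)\geq 0,\qquad \lambda\in(0,1].
\]

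Finally, for $\lambda=0$ (which is meaningful precisely when $k+l\geq 1$, so that $N$ carries a non-empty boundary) I would repeat the continuity argument from the proof of the Main Theorem. For each small $\lambda>0$ and each $\epsilon>0$ choose a conformal class $[g]$ on $N$ with $Y_\lambda(N,\partial N,[g])\geq\sigma_\lambda(N)-\epsilon\geq-\epsilon$, and then use the continuity of $Y_\mu(N,\partial N,[g])$ in $\mu$ (Section 5) to pass $\mu\to 0$, producing a class with $Y_0(N,\partial N,[g])$ arbitrarily close to non-negative, whence $\sigma_0(N)\geq 0$. The hard part will be exactly this last step: the generalized Kobayashi inequality is only available for $\lambda\in(0,1]$ here, since the \emph{closed} summands $M^n,\mathbb{RP}^n,\mathbb{RP}^{n-1}\times S^1$ block the version valid on all of $[0,1]$ (which needs both summands to have boundary), so one must bridge to $\lambda=0$ purely through continuity in $\lambda$. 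The delicate point is that the near-optimal conformal class depends on $\lambda$, so some care is needed to make the comparison between $Y_\lambda$ and $Y_0$ uniform enough to survive the limit.
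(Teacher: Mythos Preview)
Your treatment of $\lambda\in(0,1]$ is correct and matches the paper exactly: Petean gives $\sigma_\lambda(M^n)\geq 0$ for the closed simply connected summands, the bounds of Sections~3--4 cover the remaining factors, and iterating the min-form of the generalized Kobayashi inequality yields the claim.

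For $\lambda=0$ you propose to invoke the continuity of $Y_\mu(N,\partial N,[g])$ in $\mu$ from Section~5, and you correctly isolate the obstacle: the near-optimal class $[g]$ depends on $\lambda$, and knowing $Y_\lambda(N,\partial N,[g])\geq -\epsilon$ at one fixed $\lambda>0$ says nothing about $Y_0(N,\partial N,[g])=\lim_{\mu\to 0}Y_\mu(N,\partial N,[g])$ without further input (and the proposition of Section~5 moreover assumes $Y_{1,0}(N,\partial N,[g])\geq 0$, which you have not secured for this $[g]$). The paper's proof of this corollary does \emph{not} appeal to Section~5 here. It fixes a single $\lambda\in(0,1)$ and a single class $[g_\epsilon]$ with $Y_\lambda(N,\partial N,[g_\epsilon])\geq -\epsilon$, and then compares the constraint sets directly: for any $u\in C_{0,1}(N,g_\epsilon)$ one has
\[
\lambda\int_N|u|^p\,dV_{g_\epsilon}+(1-\lambda)\int_{\partial N}|u|^q\,d\sigma_{g_\epsilon}\;\geq\;1-\lambda,
\]
so with $C=(1-\lambda)^{-1/q}$ the function $Cu$ has $(\lambda,1-\lambda)$-constraint value at least $1$, giving
\[
E_N(u)=C^{-2}E_N(Cu)\;\geq\;(1-\lambda)^{2/q}\,Y_\lambda(N,\partial N,[g_\epsilon])\;\geq\;-\epsilon.
\]
Hence $\sigma_0(N)\geq Y_0(N,\partial N,[g_\epsilon])\geq -\epsilon$, and letting $\epsilon\to 0$ finishes. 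No limit in $\lambda$ is ever taken and no uniformity in the choice of $[g]$ is needed, so the difficulty you anticipate simply does not arise; replacing your bare continuity appeal by this one-step rescaling comparison closes the gap.
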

\begin{proof}
When $\lambda\in(0,1]$, this is a direct corollary. 

Now we consider the case when $\lambda=0$, and denote
\begin{displaymath}
N=\# mM^n\#m_1\mathbb{RP}^{n}\#m_2\mathbb{RP}^{n-1}\times S^1\# lH^n\#k S^n_+.
\end{displaymath}
For any fixed $\lambda\in(0,1)$, and any $\epsilon>0$, there exists a conformal class of Riemannian metric $[g_{\epsilon}]$ on $N$, such that
\begin{displaymath}
Y_{\lambda}(N,\partial N, [g_{\epsilon}])\geq \sigma_{\lambda}(N)-\epsilon\geq -\epsilon.
\end{displaymath}
Then for any $u\in C_{0,1}(N)$, we have
\[
\lambda\int_{N}|u|^pdV_{g_{\epsilon}}+(1-\lambda)\int_{\partial N}|u|^qd\sigma_{g_{\epsilon}}\geq 1-\lambda.
\]
Consequently, we have
\[
\lambda\int_{N}|Cu|^pdV_{g_{\epsilon}}+(1-\lambda)\int_{\partial N}|Cu|^qd\sigma_{g_{\epsilon}}\geq 1.
\]
where $C=(1-\lambda)^{-1/q}$.
Then we can obtain that, for any $u\in C_{0,1}(N)$
\[
E_N(u)=\frac{E_N(Cu)}{C^2}\geq \frac{1}{(1-\lambda)^{2/q}}Y_{\lambda}(N,\partial N, [g_{\epsilon}])\geq -\epsilon,
\]
which implies
\[
\sigma_0(N)\geq Y_0(N,\partial N, [g_\epsilon])\geq -\epsilon.
\]
Since $\epsilon$ can be arbitrarily small, we know
\begin{displaymath}
\sigma_0(N)\geq 0.
\end{displaymath}
\end{proof}

\bibliographystyle{plain} 
\bibliography{ref} 

\begin{thebibliography}{10}

\bibitem{akutagawa2018relative}
Kazuo Akutagawa and Boris Botvinnik.
\newblock Relative yamabe invariant.
\newblock {\em arXiv preprint math/0008138}, 2018.

\bibitem{akutagawa2007yamabe}
Kazuo Akutagawa, Luis~A Florit, and Jimmy Petean.
\newblock On yamabe constants of riemannian products.
\newblock {\em Communications in analysis and geometry}, 15(5):947--969, 2007.

\bibitem{akutagawa2007perelman}
Kazuo Akutagawa, Masashi Ishida, and Claude LeBrun.
\newblock Perelman’s invariant, ricci flow, and the yamabe invariants of
  smooth manifolds.
\newblock {\em Archiv der Mathematik}, 88(1):71--76, 2007.

\bibitem{akutagawa20073}
Neves Akutagawa.
\newblock 3-manifolds with yamabe invariant greater than that of
  $\mathbb{RP}^3$.
\newblock {\em Journal of Differential Geometry}, 75(3):359--386, 2007.

\bibitem{aubin1976equations}
T~AUBIN.
\newblock Equations differentielles non lineaires et probleme de yamabe
  concernant la courbure scalaire.
\newblock {\em J. Math. Pures Appl.(9)}, 55:269--296, 1976.

\bibitem{bray2004classification}
Hubert~L Bray and Andr{\'e} Neves.
\newblock Classification of prime 3-manifolds with yamabe invariant greater
  than.
\newblock {\em Annals of mathematics}, pages 407--424, 2004.

\bibitem{escobar1990uniqueness}
Jos{\'e}~F Escobar.
\newblock Uniqueness theorems on conformal deformation of metrics, sobolev
  inequalities, and an eigenvalue estimate.
\newblock {\em Communications on Pure and Applied Mathematics}, 43(7):857--883,
  1990.

\bibitem{escobar1992conformal}
Jos{\'e}~F Escobar.
\newblock Conformal deformation of a riemannian metric to a scalar flat metric
  with constant mean curvature on the boundary.
\newblock {\em Annals of Mathematics}, 136(1):1--50, 1992.

\bibitem{Escobar96}
José~F. Escobar.
\newblock Conformal deformation of a riemannian metric to a constant scalar
  curvature metric with constant mean curvature on the boundary.
\newblock {\em Indiana University Mathematics Journal}, 45(4):917--943, 1996.

\bibitem{gursky1998yamabe}
Matthew~J Gursky and Claude LeBrun.
\newblock Yamabe invariants and spin\^{}c structures.
\newblock {\em Geometric \& Functional Analysis GAFA}, 8(6):965--977, 1998.

\bibitem{Kobayashi1987/88}
Osamu Kobayashi.
\newblock Scalar curvature of a metric with unit volume.
\newblock {\em Mathematische Annalen}, 279(1-2):253--266, 1987/88.

\bibitem{lebrun1996yamabe}
Claude LeBrun.
\newblock Yamabe constants and the perturbed seiberg-witten equations.
\newblock {\em arXiv preprint dg-ga/9605009}, 1996.

\bibitem{lebrun1997kodaira}
Claude LeBrun.
\newblock Kodaira dimension and the yamabe problem.
\newblock {\em arXiv preprint dg-ga/9702012}, 1997.

\bibitem{lebrun2004einstein}
Claude LeBrun.
\newblock Einstein metrics, four-manifolds, and differential topology.
\newblock {\em arXiv preprint math/0404359}, 2004.

\bibitem{petean1999surgery}
J~Petean and G~Yun.
\newblock Surgery and the yamabe invariant.
\newblock {\em Geometric And Functional Analysis}, 9(6):1189--1199, 1999.

\bibitem{petean2000yamabe}
Jimmy Petean.
\newblock {\em The Yamabe invariant of simply connected manifolds}.
\newblock Walter de Gruyter GmbH \& Co. KG Berlin, Germany, 2000.

\bibitem{petean2011isoperimetric}
Jimmy Petean and Juan~Miguel Ruiz.
\newblock Isoperimetric profile comparisons and yamabe constants.
\newblock {\em Annals of Global Analysis and Geometry}, 40(2):177--189, 2011.

\bibitem{schoen1984conformal}
Richard Schoen.
\newblock Conformal deformation of a riemannian metric to constant scalar
  curvature.
\newblock {\em Journal of Differential Geometry}, 20(2):479--495, 1984.

\bibitem{schoen1989variational}
Richard~M Schoen.
\newblock Variational theory for the total scalar curvature functional for
  riemannian metrics and related topics.
\newblock In {\em Topics in calculus of variations}, pages 120--154. Springer,
  1989.

\bibitem{schwartz2009monotonicity}
Fernando Schwartz.
\newblock Monotonicity of the yamabe invariant under connect sum over the
  boundary.
\newblock {\em Annals of Global Analysis and Geometry}, 35(2):115--131, 2009.

\bibitem{sun2017yamabe}
Liming Sun.
\newblock {\em Yamabe Problem on Compact Manifolds with Boundary}.
\newblock Rutgers The State University of New Jersey-New Brunswick, 2017.

\bibitem{trudinger1968remarks}
Neil~S Trudinger.
\newblock Remarks concerning the conformal deformation of riemannian structures
  on compact manifolds.
\newblock {\em Annali della Scuola Normale Superiore di Pisa-Classe di
  Scienze}, 22(2):265--274, 1968.

\bibitem{yamabe1960deformation}
Hidehiko Yamabe.
\newblock On a deformation of riemannian structures on compact manifolds.
\newblock {\em Osaka Mathematical Journal}, 12(1):21--37, 1960.

\end{thebibliography}

\end{document}